\documentclass[letterpaper, 10 pt, conference]{ieeeconf}  % Comment this line out
\IEEEoverridecommandlockouts                              % This command is only
\title{\LARGE \bf
Optimality of Affine Policies in Distributionally Robust Linear-Quadratic Control with Temporally Correlated Noise
}

\author{Jakob Nylöf, Daniel Kuhn, John Lygeros, and Giancarlo Ferrari-Trecate% <-this % stops a space
\thanks{This work was supported as a part of NCCR Automation, a National Centre of Competence in Research, funded by the Swiss National Science Foundation (grant number 51NF40\_225155).}% <-this % stops a space
\thanks{J. Nylöf and G. Ferrari-Trecate are with the Institute of Mechanical Engineering and D. Kuhn is with the College of Management of Technology, École Polytechnique Fédérale de Lausanne (EPFL), CH-1015 Lausanne, Switzerland.
        {\tt \{\href{mailto:jakob.nylof@epfl.ch}{jakob.nylof}, \href{mailto:daniel.kuhn@epfl.ch}{daniel.kuhn}, \href{mailto:giancarlo.ferraritrecate@epfl.ch}{giancarlo.ferraritrecate}\}@epfl.ch}}%
\thanks{J. Lygeros is with the Department of Information
Technology and Electrical Engineering, ETH Zürich, CH-8092 Zürich, Switzerland.
        {\tt \href{mailto:jlygeros@ethz.ch}{jlygeros@ethz.ch}}}%
}

\usepackage{graphics}
\usepackage{graphicx}
\usepackage{amsmath}
\usepackage{amssymb}
\usepackage{algorithm}
\usepackage{algpseudocode}
\usepackage{color}
\usepackage{cite}
\usepackage{cancel}
\usepackage{hyperref}

\DeclareMathOperator*{\argmin}{arg\,min}
\DeclareMathOperator*{\argmax}{arg\,max}
\newcommand{\norm}[1]{\left\lVert#1\right\rVert}

\newcommand{\Tr}{\mathrm{Tr}}

\newtheorem{assumption}{Assumption}%[section]
\newtheorem{theorem}{Theorem}%[section]
\newtheorem{proposition}{Proposition}%[section]

\algtext*{EndFor}
\algrenewcommand\algorithmicindent{1em}

\begin{document}

\maketitle
\thispagestyle{empty}
\pagestyle{empty}

\begingroup
\renewcommand\thefootnote{}
\footnotetext{\scriptsize
© 2026 IEEE. Personal use of this material is permitted.
Permission from IEEE must be obtained for all other uses, in any current
or future media, including reprinting/republishing this material for
advertising or promotional purposes, creating new collective works,
for resale or redistribution to servers or lists, or reuse of any
copyrighted component of this work in other works.}
\addtocounter{footnote}{-1}
\endgroup

%%%%%%%%%%%%%%%%%%%%%%%%%%%%%%%%%%%%%%%%%%%%%%%%%%%%%%%%%%%%%%%%%%%%%%%%%%%%%%%%
\begin{abstract}
% Proposition 4 extends.
We study finite-horizon distributionally robust linear-quadratic control with disturbances that can be arbitrarily correlated in time.
% In contrast to existing problem formulations,
The ambiguity set is modeled as a single 2-Wasserstein ball centered at a nominal elliptically contoured distribution of the disturbances.
Despite the infinite-dimensionality of both the policy space and ambiguity set, we prove that the optimal policy is affine and that the worst-case distribution is an affine push-forward of the nominal distribution. These affine maps can be computed efficiently via a best-response algorithm based on the Frank--Wolfe algorithm. Experiments demonstrate improved out-of-sample performance over LQG and its distributionally robust extensions when the true disturbances are correlated, incurring only a small loss of performance under uncorrelated disturbances.

\end{abstract}

%%%%%%%%%%%%%%%%%%%%%%%%%%%%%%%%%%%%%%%%%%%%%%%%%%%%%%%%%%%%%%%%%%%%%%%%%%%%%%%%
\section{Introduction}

Temporally correlated disturbances arise in many real-world control systems,
including power networks \cite{saffari2024spatiotemporal}, navigation systems \cite{langbein2019navigation} and air traffic management \cite{chaloulos2007effect}.
However, precisely identifying temporal correlations from data can be challenging, even in the presence of a single outlier \cite{Chan1992temporaloutliers}.
Since stochastic control relies on knowledge of the distribution of disturbances, misspecification, especially of its temporal correlations, may significantly degrade control performance.

Distributionally robust control addresses stochastic control problems subject to distributional uncertainty. Such problems are formulated as zero-sum games between the decision maker and a fictitious adversary, metaphorically thought of as \emph{Nature}. The decision maker plays first and selects a control policy in order to minimize the expected control cost. Nature plays second and selects a distribution in an ambiguity set to maximally frustrate the decision maker. Thus, the decision maker minimizes the worst-case expected control cost with respect to all distributions in the ambiguity set.

Although both the policy space and the ambiguity set are infinite-dimensional, Distributionally Robust Linear–Quadratic (DRLQ) control problems often admit simple optimal policies. A distributionally robust extension of the classical Linear-Quadratic Gaussian (LQG) control problem is studied in \cite{taşkesen2023distributionallyrobustlinearquadratic_neurips, taşkesen2023distributionallyrobustlinearquadratic}. As in LQG control, the policy space consists of all causal output-feedback policies. However, the process and measurement noise terms have ambiguous distributions that range over separate 2-Wasserstein balls centered at nominal Gaussian distributions. This gives rise to an ambiguity set that includes non-Gaussian distributions but enforces temporal independence of the disturbances. It can be shown that the optimal controller is affine and thus admits a finite parametrization, while the worst-case distribution is a Gaussian distribution parametrized by its finite-dimensional covariance matrix. Extensions of these structural results to other DRLQ settings are reported in \cite{fochesato2025distributionallyrobustlqgkullbackleibler,lanzetti2025optimalitylinearpoliciesdistributionally}. These findings are also related to the optimality of affine estimators in mean squared error estimation and filtering problems \cite{nguyen2021mean, soroosh2018kalman}.

In the existing DRLQ literature, the optimality of affine policies is usually established under the assumption that the disturbances are temporally independent under all distributions in the ambiguity set. Indeed, the proofs build on the classical LQG theory, where the optimality of affine policies follows from the celebrated separation principle, which holds when the disturbances are Gaussian and independent across time. However, in the presence of temporal correlations, the separation principle breaks down. Moreover, it remains unclear whether robustness to temporally independent noise provides sufficient protection against temporal correlations.

Existing DRLQ formulations typically also assume that all distributions in the ambiguity set have zero means, which, together with temporal independence, causes Nature's problem to decouple over time into independently solvable subproblems~\cite{taşkesen2023distributionallyrobustlinearquadratic}. In contrast, non-zero means introduce cross-terms in the objective, which break temporal separability.

In this work, we relax the assumption of independent, zero-mean disturbances and study DRLQ control problems with temporally correlated disturbances (DRLQ-C). We define the ambiguity set as a single 2-Wasserstein ball centered at an elliptically contoured nominal distribution, which generalizes the standard Gaussian setting. Our ambiguity set also contains distributions with non-zero mean.

Our contributions are threefold. First, we show that in DRLQ-C the zero-sum game between the decision maker and Nature admits a Nash equilibrium. The decision maker’s equilibrium strategy is an affine output-feedback policy, while Nature’s equilibrium strategy is an affine push-forward of the nominal distribution. Since these affine maps admit finite parametrizations, the DRLQ-C problem reduces to a finite-dimensional zero-sum game. Our proof differs from those in the DRLQ literature in that it cannot rely on the separation principle. Moreover, working with one single ambiguity set in DRLQ-C allows us to handle the temporal coupling introduced by distributions with non-zero means. Second, we show that the Nash equilibrium can be computed efficiently via a Frank--Wolfe algorithm, which can be interpreted as an iterated best-response algorithm. Third, we demonstrate through numerical experiments that DRLQ-C improves the out-of-sample performance in the presence of correlated noise by (i) achieving lower out-of-sample cost than LQG and DRLQ when the disturbances are correlated under the true distribution, and (ii) remaining only mildly conservative when the disturbances are uncorrelated.

DRLQ-C differs fundamentally from prior DRLQ formulations in that it permits arbitrary temporal dependencies among the disturbances. Note that temporally correlated noise can be incorporated into LQG by representing the disturbances as outputs of an auxiliary linear system driven by independent noise. By augmenting the state, classical DRLQ can thus account for serial correlations in the true disturbances while maintaining the independence of the auxiliary disturbances. However, this approach restricts the admissible noise correlations to those generated by a linear system and fails to capture nonlinear dependencies.

Several works in distributionally robust control impose an affine control parameterization without establishing its optimality 
\cite{kargin2024wasserstein_regret_ih,brouillon2024dro_ih,bartvanparys2016dro_constrainted_stochastic_systems,cescon2025datadrivendistributionallyrobustcontrol}, while others focus on moment ambiguity sets \cite{hakobyan2024wasserstein_DRO}, which provide a coarser description of distributional uncertainty. Establishing the optimality of affine policies under general conditions therefore naturally complements the aforementioned works, particularly when effective computational methods are already available.

\textit{Notation:}
For any $T \in \mathbb{N}$, we define $[T]:=\{0,\dots,T\}$. %Denote by $(x_1,\dots, x_m)$ the column stacked vector of $x_1, \dots, x_m \in \mathbb{R}^n$. 
If $A\in\mathbb{R}^{n\times m}$, then $\|A\|_\mathrm{F}$ denotes its Frobenius norm, $\Tr(A)$ its trace and $\mathrm{vec}(A)$ its column-wise vectorization with inverse $\mathrm{vec}^{-1}$. The Kronecker product of $A$ with $B \in \mathbb{R}^{p \times q}$ is denoted by $A \otimes B$. We use $\operatorname{diag}(A_1,\dots,A_n)$ to denote the block-diagonal matrix with diagonal blocks $A_1,\dots,A_n$. The sets of symmetric, positive semidefinite, and positive definite matrices in $\mathbb{R}^{n\times n}$ are denoted by $\mathbb{S}^n$, $\mathbb{S}_+^n$, and $\mathbb{S}_{++}^n$, respectively, and $I_n$ stands for the identity matrix in~$\mathbb{R}^n$.
% , or $I$ when dimensions are unambiguous.
The positive semidefinite square root of $A\in\mathbb{S}_+^n$ is denoted by $A^{1/2}$. We use $\mathcal P(\mathbb R^n)$ to denote the set of all probability distributions on $(\mathbb R^n,\mathcal B(\mathbb R^n))$ with finite second moments.
% We identify each $\mathbb{P} \in \mathcal P(\mathbb R^n)$ with the law of the identity map on $\mathbb{R}^n$ and thus refer to the elements of $\mathcal{P}(\mathbb R^n)$ as probability distributions.
We write $\mathbb{P} \sim (\mu, \Sigma)$ if $\mathbb{P} \in \mathcal{P}(\mathbb{R}^n)$ has mean vector $\mu$ and covariance matrix $\Sigma$,
%We work with the canonical random vector $\xi(x) = x$ on $\mathbb R^n$, and refer to elements of $\mathcal P_2(\mathbb R^n)$ as distributions.
and we use $\mathcal N(\mu,\Sigma)$ to denote the Gaussian distribution with mean vector $\mu$ and covariance matrix $\Sigma$. For any $\mathbb{P}\in\mathcal{P}(\mathbb{R}^{n})$ and Borel function $f:\mathbb{R}^{n}\to\mathbb{R}^{m}$, $f_\#\mathbb{P}$ stands for the push-forward of $\mathbb{P}$ under $f$.
\section{Problem Formulation}

We consider the linear time-varying system
\begin{equation}
    \label{eq:system_finite_horizon}
\begin{aligned}
    &x_{t+1} = A_tx_t + B_tu_t + D_tw_t, \quad y_t = C_tx_t + v_t,
\end{aligned}
\end{equation}
for $t\in[T-1]$, where $T \in \mathbb{N}$ denotes the time horizon, $x_t\in \mathbb{R}^{n}, \ u_t\in \mathbb{R}^{m}, \ w_t\in \mathbb{R}^{r}, \ y_t\in \mathbb{R}^{p}$ and $v_t \in \mathbb{R}^{p}$ denote the state, control input, process noise, output and measurement noise, respectively,
and $A_t, B_t, C_t, D_t$ are matrices of appropriate dimensions.
The initial state $x_0$ and the noise vectors $w_t$ and $v_t$, $t \in [T-1]$, constitute decision-\emph{in}dependent exogenous random vectors. We combine them to a stacked vector $\xi :=\left(x_0, w_0, v_0,w_1,v_1,\ldots, w_{T-1}, v_{T-1}\right)$ of dimension $N_{\xi}:=n+rT + pT$.
% , is identified with the identity map on $\mathbb{R}^{N_\xi}$ and has
% % identified with the identity map on $\mathbb{R}^{N_\xi}$
% an unknown distribution in $\mathcal{P}(\mathbb{R}^{N_\xi})$.
The decision-dependent endogenous random vectors are the states $x_t$, $t\in[T]$, as well as the inputs $u_t$ and outputs $y_t$, $t\in[T-1]$.
% , defined as Borel measurable functions on $\mathbb R^{N_\xi}$ satisfying \eqref{eq:system_finite_horizon}.
We combine them to stacked vectors 
$x  :=\left(x_0, \ldots, x_T\right) \in \mathbb{R}^{N_x}$, 
$u  :=\left(u_0, \ldots, u_{T-1}\right) \in \mathbb{R}^{N_u}$ and $y :=\left(y_0, \ldots, y_{T-1}\right) \in \mathbb{R}^{N_y}$, where $N_x:=n (T+1)$, $N_u:=mT$, and $N_y:=pT$. We emphasize that $u$, $x$, and $y$ constitute functions of~$\xi$. However, we suppress this dependency notationally to avoid clutter. The system equations \eqref{eq:system_finite_horizon} can now be written compactly as
$x = Hu + D\xi$ and $y = Cx + E\xi$,
where $H \in \mathbb{R}^{N_x \times N_u},$ $C \in \mathbb{R}^{N_y \times N_{x}}$, $D \in \mathbb{R}^{N_x \times N_{\xi}}$ and $E \in \mathbb{R}^{N_y \times N_{\xi}}$ are defined in Appendix \ref{appdx:matrices}.
% We restrict attention to control inputs $u$ causally measurable with respect to the outputs,
Control inputs must be causal,
that is, for every $t \in [T-1]$, $u_t$ must be measurable with respect to the $\sigma$-algebra generated by $y_0,\dots, y_t$.
Equivalently, there must be some Borel function $\pi_t: \mathbb{R}^{p(t+1)} \to \mathbb{R}^{m}$ such that $u_t = \pi_t(y_0,\dots, y_t)$.
% Equivalently, $u$ is induced by a causal output-feedback policy, i.e., for every $t \in [T-1]$, $u_t = \pi_t(y_0,\dots, y_t)$ for some Borel measurable function $\pi_t: \mathbb{R}^{p(t+1)} \to \mathbb{R}^{m}$.
Let $\mathcal N_y$ be the set of all causal inputs~$u$.
% Define the set of causal control policies in the observations $y$ as \begin{equation*}
% \mathcal{N}_y
% := \left\{\, u : \mathbb{R}^{N_y} \to \mathbb{R}^{N_u} \ :\ 
% \begin{aligned}
% &\forall t \in [T],\ \exists\ \text{Borel function } \\
% &\varphi_t : \mathbb{R}^{n_y(t+1)} \to \mathbb{R}^{n_u} \ \text{s.t.}\\
% &u_t(y) = \varphi_t(y_0,\ldots,y_t)
% \end{aligned}
% \,\right\}
% \end{equation*}
% %i.e., $u\in \mathcal{N}_y$ if and only if there exist Borel measurable functions $\varphi_t:\mathbb{R}^{n_y(t+1)}\to \mathbb{R}^{n_u}$ for all times $t$ such that $u_t = \varphi_t(y_0,\dots, y_{t})$, 
% i.e., $u\in \mathcal{N}_y$ if and only if $u_t$ is a causal measurable function of the history $(y_0,\dots,y_{t})$ for all $t\in[T]$.

We assume throughout the paper that the distribution $\mathbb P$ of the exogenous uncertainties~$\xi$ is ambiguous and is only known to fall inside the 2-Wasserstein ambiguity set
\begin{align*}
    \label{eq:Wasserstein_ambiguity_set}
    \mathcal{W} = \left\{\mathbb{P}\in \mathcal{P}(\mathbb{R}^{N_\xi}):\mathbb{W}(\mathbb{P}, \hat{\mathbb{P}})\leq\rho \right\}
\end{align*}
of radius $\rho \geq 0$ around a nominal distribution $\hat{\mathbb{P}} \in \mathcal{P}(\mathbb{R}^{N_\xi})$. The 2-Wasserstein distance between $\mathbb{P} , \hat{\mathbb{P}} \in \mathcal{P}(\mathbb{R}^{N_\xi})$ is
$$
\mathbb{W}(\mathbb{P}, \hat{\mathbb{P}})
:=
\bigg(
\inf_{\pi \in \Pi(\mathbb{P}, \hat{\mathbb{P}})}
\int_{\mathbb{R}^{N_\xi} \times \mathbb{R}^{N_\xi}}
\| \xi - \xi' \|_2^2
\, \mathrm{d}\pi(\xi, \xi')
\bigg)^{1/2},
$$
where $\Pi(\mathbb{P}, \hat{\mathbb{P}})$ denotes the set of distributions on $\mathbb{R}^{N_\xi}\times\mathbb{R}^{N_\xi}$ under which $\xi$ and $\xi'$ have marginals $\mathbb{P}$ and $\hat{\mathbb{P}}$, respectively \cite[Definition 2.18]{kuhn2024distributionallyrobustoptimization}. Unlike the product ambiguity sets over independent, zero-mean noise distributions considered in \cite{taşkesen2023distributionallyrobustlinearquadratic,taşkesen2023distributionallyrobustlinearquadratic_neurips}, $\mathcal{W}$ accommodates distributions of the exogenous noise $\xi$ with non-zero mean and general intertemporal dependencies.
%Moreover, being a single Wasserstein ball without any additional constraints rather than a product of constrained Wasserstein balls, one could argue that \eqref{eq:Wasserstein_ambiguity_set} is the more natural choice for the ambiguity set.

We use $\mathcal W$ to formulate the DRLQ-C model as
\begin{equation}
    \label{eq:DRC_y}
    %\tag{}
    \begin{aligned}
      \inf_{u,x,y} \quad &\sup_{\mathbb{P} \in \mathcal{W}} \ \mathbb{E}_{\mathbb{P}}\left[x^\top Qx + u^\top Ru\right]\\
    \text{s.t.} \quad & u \in \mathcal{N}_y, \ x = Hu + D\xi, \ y = Cx + E\xi,
    \end{aligned}
\end{equation}
where $Q \in \mathbb{S}_{+}^{N_x}$ and $R \in \mathbb{S}_{++}^{N_u}$ represent the state and input cost matrices. Problem $\eqref{eq:DRC_y}$ is distributionally robust in the sense that it minimizes the worst-case expected control cost with respect to all distributions in $\mathcal{W}$. Note that \eqref{eq:DRC_y} can be viewed as a zero-sum game between the decision maker, who plays first, and Nature, who reacts.
By allowing for temporally correlated disturbances in $\mathcal{W}$, DRLQ-C gives Nature more power than the DRLQ models in \cite{taşkesen2023distributionallyrobustlinearquadratic_neurips, taşkesen2023distributionallyrobustlinearquadratic}. We impose the following assumption on $Q$ and $D$.
\begin{assumption}
\label{assm:process_noise_cost}
$Q^\frac{1}{2}D \neq 0$.
\end{assumption}
As $D$ captures the dependence of the state~$x$ on~$\xi$, the condition $Q^\frac{1}{2}D \neq 0$ guarantees that the state cost defined via $Q$ is sensitive to~$\xi$. Using $\hat{\mu} := \mathbb{E}_{\hat{\mathbb{P}}}[\xi]$ and $\hat{\Sigma} := \mathrm{Cov}_{\hat{\mathbb{P}}}[\xi]$ as shorthands for the mean and covariance of matrix of $\xi$ under $\hat{\mathbb{P}}$, we impose the following two assumptions on $\hat{\mathbb{P}}$.
\begin{assumption}
\label{assm:P_hat_positive_covariance}
    $\hat{\Sigma}\in \mathbb{S}_{++}^{N_\xi}$.
\end{assumption}

\begin{assumption}
\label{assm:P_hat_elliptical}
The nominal distribution $\hat{\mathbb{P}}$ is elliptically contoured in the sense of \cite[Definition~3.1]{hult2002multivariate}.
\end{assumption}

Examples of elliptically contoured distributions include Gaussian distributions, Laplace distributions or uniform distributions on ellipsoids, among many others.

Problem~\eqref{eq:DRC_y} is non-convex because the constraint $u \in \mathcal{N}_y$ constitutes a non-convex equality constraint in~$u$ and~$y$. To convexify~\eqref{eq:DRC_y}, we introduce the purified output process $\eta = (\eta_{0},\dots, \eta_{T-1})\in\mathbb{R}^{N_y}$ as in \cite{ben2005control} defined as $\eta := F\xi$ with $F := CD + E \in \mathbb{R}^{N_y \times N_\xi}$; see Appendix~\ref{appdx:matrices} for details. We denote by $\mathcal{N}_\eta$ the set of all control inputs that are causal with respect to~$\eta$. That is, $u\in\mathcal N_\eta$ if and only if for every $t \in [T-1]$, $u_t$ is measurable with respect to the $\sigma$-algebra generated by $\eta_0,\dots, \eta_t$. Equivalently, there must be some Borel function $\tau_t:\mathbb{R}^{p(t+1)} \to \mathbb{R}^{m}$ such that $u_t = \tau_t(\eta_0,\dots, \eta_t)$. One can show that $\mathcal{N}_y = \mathcal{N}_\eta$ \cite[Proposition II.1]{hadjiyiannis2011efficient}. Therefore, $\mathcal{N}_y$ can be replaced by $\mathcal{N}_\eta$ in \eqref{eq:DRC_y} without changing the problem. As $\eta$ is exogenous (independent of $u$), this coordinate transformation renders the variable~$y$ and the constraint $y = Cx + E\xi$ redundant. Hence, \eqref{eq:DRC_y} is equivalent to the primal DRLQ-C problem
\begin{equation}
    \label{eq:DRC}
    \tag{$\mathcal{P}$}
    \begin{aligned}
      \inf_{u,x} \quad &\sup_{\mathbb{P} \in \mathcal{W}} \ \mathbb{E}_{\mathbb{P}}\left[x^\top Qx + u^\top Ru\right]\\
    \text{s.t.} \quad & u \in \mathcal{N}_\eta, \ x = Hu + D\xi.
    \end{aligned}
\end{equation}
Note that the constraint $u \in \mathcal{N}_\eta$ is convex in $u$ because $\eta$ is exogenous, and thus \eqref{eq:DRC} is a convex optimization problem.

\section{Optimality of Affine Policies}

% Changing the order of $\inf$ and $\sup$ in \eqref{eq:DRC} yields the dual
% \begin{equation}
%     \label{eq:DDRC}
%     \tag{$\mathcal{D}$}
%     \begin{aligned}
%       \sup_{\mathbb{P} \in \mathcal{W}} \quad \inf \quad &\mathbb{E}_{\mathbb{P}}\left[x^\top Qx + u^\top Ru\right]\\
%     \text{s.t.} \quad & u \in \mathcal{N}_y, \ x = Hu + D\xi, \ y = Cx + E\xi,
%     \end{aligned}
% \end{equation}
% which models the zero-sum game where the decision maker plays second to maximally penalize Nature.

% In this section, we present the main result of the paper. The proofs of all technical results can be found in Appendix~\ref{appdx:proofs}.
It is expedient to introduce the dual DRLQ-C problem 
\begin{equation}
    \label{eq:DDRC}
    \tag{$\mathcal{D}$}
    \begin{aligned}
      \sup_{\mathbb{P} \in \mathcal{W}} \quad \inf_{u,x} \quad &\mathbb{E}_{\mathbb{P}}\left[x^\top Qx + u^\top Ru\right]\\[-1ex]
    \text{s.t.} \quad & u \in \mathcal{N}_\eta, \ x = Hu + D\xi,
    \end{aligned}
\end{equation}
which is obtained from \eqref{eq:DRC} by interchanging the order of minimization and maximization. Here, Nature plays first, and the decision maker reacts. By weak duality, \eqref{eq:DDRC} lower bounds~\eqref{eq:DRC}. In the remainder of this section, we derive an upper bound on \eqref{eq:DRC} and a lower bound on \eqref{eq:DDRC}. Next, we show that these bounds coincide, which will yield a characterization of the optimal solutions to \eqref{eq:DRC} and \eqref{eq:DDRC}. All proofs are relegated to Appendix~\ref{appdx:proofs}.

\subsection{Upper Bound on \eqref{eq:DRC}}
We proceed in two steps to derive an upper bound on \eqref{eq:DRC}. First, we restrict \eqref{eq:DRC} to minimize only over affine purified output-feedback policies $u = U\eta + q$, where $q\in \mathbb{R}^{N_u}$ and 
\begin{equation}
\label{eq:blocklower}
    U = \begin{bmatrix}
        U_{0,0} \\
        \vdots &  \ddots \\
        U_{T-1,0} & \cdots & U_{T-1,T-1}  \\
    \end{bmatrix}\in \mathbb{R}^{N_u\times N_y}
\end{equation}
is a block lower triangular matrix with $U_{t,s} \in \mathbb{R}^{m \times p}$ for $0 \leq s \leq t \leq T-1$. In the following, we use $\mathcal{U}$ to denote the set of all $(q,U) \in \mathbb{R}^{N_u} \times \mathbb{R}^{N_u \times N_y}$ with $U$ satisfying \eqref{eq:blocklower}. Second, we relax $\mathcal{W}$ to the Gelbrich ambiguity set
\vspace{-0.2em}
\begin{align*}
    \mathcal{G} := \left\{\mathbb{P}\in \mathcal{P}(\mathbb{R}^{N_\xi}): \ \left(\mathbb{E}_\mathbb{P}[\xi], \ \mathbb{E}_\mathbb{P}[\xi\xi^\top]\right) \in \mathcal{M} \right\},
\end{align*}
which is defined in terms of the moment uncertainty set
\begin{align*}
&\mathcal{M}:=\left\{(\mu, M) \in \mathbb{R}^{N_\xi} \times \mathbb{S}_{+}^{N_\xi}: \begin{aligned}
&\exists \Sigma \succeq 0,
\ M=\Sigma+\mu \mu^{\top} \\
&\mathbb{G}^2((\mu, \Sigma),(\hat{\mu}, \hat{\Sigma})) \leq \rho^2\\
\end{aligned}\right\}
\end{align*}
and the (squared) Gelbrich distance
\[
    \mathbb{G}^2((\mu, \Sigma), (\hat\mu, \hat \Sigma)) := \norm{\mu - \hat{\mu}}_2^2 + \Tr[\Sigma + \hat{\Sigma} - 2(\hat{\Sigma}^\frac{1}{2}\Sigma\hat{\Sigma}^\frac{1}{2})^\frac{1}{2}]
\]
on $\mathbb{R}^{N_\xi} \times \mathbb{S}_+^{N_\xi}$ \cite[Definition 2.1]{kuhn2024distributionallyrobustoptimization}. One can show that the Wasserstein ambiguity set $\mathcal{W}$ is contained in the Gelbrich ambiguity set $\mathcal{G}$ \cite[Theorem 2.20]{kuhn2024distributionallyrobustoptimization}. Hence, restricting the primal feasible set to affine policies and relaxing the dual feasible set to $\mathcal{G}$ yields the following upper bound on~\eqref{eq:DRC}:
\begin{equation}
    \label{eq:pre_DRCu}
    \begin{array}{c@{~}l}
      \displaystyle \inf_{u,x,q,U} & \displaystyle \sup_{\mathbb{P} \in \mathcal{G}} \ \mathbb{E}_{\mathbb{P}}\left[x^\top Qx + u^\top Ru\right]\\[2ex]
    \text{s.t.} & (q,U) \in \mathcal{U}, \ x = Hu + D\xi, \ u = UF\xi + q.
    \end{array}
\end{equation}
% where $\mathcal{U} := \{(q,U) \in \mathbb{R}^{N_u} \times \mathbb{R}^{N_u \times N_y}: U \text{ is of form } \eqref{eq:blocklower}\}$.
Next, we introduce the compact notation
\[
    K(U) := \begin{bmatrix}
    R^\frac{1}{2}UF \\
    Q^\frac{1}{2}(HUF + D)
    \end{bmatrix} \quad \text{and} \quad  L := \begin{bmatrix}
    R^\frac{1}{2} \\Q^\frac{1}{2}H
    \end{bmatrix}
\]
to define the function $J: \mathcal{U} \times \mathcal{M} \to \mathbb{R}_+$ via $$\begin{aligned}J(q, U, \mu,M) :=
\ &\mathrm{Tr}\left( K^\top(U) K(U)M\right )\\ &+2q^\top L^\top K(U) \mu + q^\top L^\top L q.\end{aligned}$$
% \begin{equation}
% \label{eq:def_g}
% \begin{aligned}
%     J(q, U, \mu,M) :=
%     &\ \mathrm{Tr}\left( K^\top(U) K(U)M\right )\\
%     &+2q^\top L^\top K(U) \mu + q^\top L^\top L q.
% \end{aligned}   
% \end{equation}
Note that the expected cost in \eqref{eq:pre_DRCu} under $\mathbb{P} \sim (\mu, M - \mu \mu^\top)$ is given by $J(q, U, \mu,M)$. Note also that $\mathbb{P} \in \mathcal{G}$ if and only if $(\mu, M) \in \mathcal{M}$. These observations allow us to reformulate~\eqref{eq:pre_DRCu} as a finite-dimensional minimax problem over $(q,U)$ and $(\mu,M)$. As $\lambda_{\min}(\hat{\Sigma}) > 0$ by Assumption \ref{assm:P_hat_positive_covariance}, the inner maximization over $\mathcal{M}$ can be restricted without loss of optimality to
%the resulting minimax problem can be further upper bounded by restricting~$\mathcal M$ to
\[
    \mathcal{M}^+ := \left\{(\mu, M) \in \mathcal{M}: M - \mu \mu^\top \succeq \lambda_{\min}(\hat{\Sigma})I_{N_\xi} \right\}.
\]
This reasoning culminates in the following proposition.

\begin{proposition}
    \label{prop:ub}
    If Assumptions \ref{assm:process_noise_cost} and \ref{assm:P_hat_positive_covariance} hold, then the primal DRLQ-C problem~\eqref{eq:DRC} is bounded above by
    \begin{equation}
    \label{eq:DRCu}
    \tag{$\mathcal{P}_u$}
    \begin{aligned}
      \min_{(q,U) \in \mathcal{U}} &\max_{(\mu, M) \in \mathcal{M}^+} J(q, U, \mu, M),
    \end{aligned}
\end{equation}
where both the minimum and the maximum are attained.~$\hfill\Diamond$
\end{proposition}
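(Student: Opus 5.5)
The proof has three parts. First, chain the upper bounds from the discussion preceding the statement. Second, justify replacing $\mathcal{M}$ by $\mathcal{M}^+$. Third, show that the minimum and maximum are attained. The last part is the main obstacle.

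\emph{Step 1 (upper bound).} Every $(q,U)\in\mathcal{U}$ yields a causal input $u=UF\xi+q\in\mathcal{N}_\eta$, because $U$ is block lower triangular. Hence restricting \eqref{eq:DRC} to such policies can only increase its value. Since $\mathcal{W}\subseteq\mathcal{G}$ by \cite[Theorem 2.20]{kuhn2024distributionallyrobustoptimization}, replacing $\mathcal{W}$ by $\mathcal{G}$ also increases the inner supremum, and this gives \eqref{eq:pre_DRCu}. For fixed $(q,U)$ the cost is $\|K(U)\xi+Lq\|_2^2$. Its expectation depends on $\mathbb{P}$ only through $(\mu,M)$ and equals $J(q,U,\mu,M)$. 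Every $(\mu,M)\in\mathcal{M}$ is realized by some $\mathbb{P}\in\mathcal{G}$, for instance a Gaussian. Therefore the inner supremum over $\mathcal{G}$ equals $\sup_{(\mu,M)\in\mathcal{M}}J$.

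\emph{Step 2 (restriction to $\mathcal{M}^+$).} Since $\mathcal{M}^+\subseteq\mathcal{M}$, we only need $\sup_{\mathcal{M}}J\le\sup_{\mathcal{M}^+}J$ for each fixed $(q,U)$. Set $\Sigma=M-\mu\mu^\top$. For fixed $\mu$, $J$ is nondecreasing in $\Sigma$ in the Loewner order, since $K^\top K\succeq0$. I would show that the Gelbrich-maximal covariances, or at least a maximizer in $\Sigma$, can be taken with $\Sigma\succeq\lambda_{\min}(\hat\Sigma)I$. One route is to replace $\Sigma$ by $\Sigma'=\Sigma\vee$ something. Monotonicity alone is not enough, however, because enlarging $\Sigma$ changes the Gelbrich distance. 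A cleaner route uses the known Gelbrich structure: the worst-case $\Sigma$ for a linear objective $\Tr(W\Sigma)$ with $W=K^\top K\succeq0$ has the form $\Sigma^\star=\gamma^2(\gamma I-W)^{-1}\hat\Sigma(\gamma I-W)^{-1}$ with $\gamma>\lambda_{\max}(W)$. Writing $\gamma(\gamma I-W)^{-1}\succeq I$ then gives $\Sigma^\star\succeq\lambda_{\min}(\hat\Sigma)I$. If $\mu$ is also optimized, the same argument applies to the joint problem, since the $\mu$-part and the $\Sigma$-part share one multiplier $\gamma$. Degenerate cases ($W=0$ or $\rho=0$) are handled directly with $\Sigma=\hat\Sigma$.

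\emph{Step 3 (attainment).} Compactness handles the maximization. $\mathcal{M}^+$ is closed, and it is bounded because $\|\mu-\hat\mu\|\le\rho$ and the Gelbrich constraint bounds $\Tr\Sigma$: the constraint gives $(\sqrt{\Tr\Sigma}-\sqrt{\Tr\hat\Sigma})^2\le\rho^2$. Since $J$ is continuous, the inner maximum is attained. The function $\phi(q,U)=\max_{\mathcal{M}^+}J$ is convex, as a pointwise maximum of functions convex in $(q,U)$ (each is an expected squared norm of an affine function), and it is continuous. For the outer minimum I would prove coercivity on the linear space $\mathcal{U}$. Take the point $(\mu,M)=(\hat\mu,\hat\Sigma+\hat\mu\hat\mu^\top)\in\mathcal{M}^+$. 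Then
\[
\phi(q,U)\ge \mathbb{E}\|K(U)\xi+Lq\|^2\ge \lambda_{\min}(\hat\Sigma)\|K(U)\|_F^2+\|K(U)\hat\mu+Lq\|^2 .
\]
Since $R\succ0$, $\|R^{1/2}UF\|_F$ grows with $\|UF\|_F$, and $F$ has full row rank because $E$ contains the identity on the $v$ components, so $\|K(U)\|_F\to\infty$ as $\|U\|\to\infty$. When $U$ stays bounded, $\|Lq\|\to\infty$ because $L$ has full column rank. Thus $\phi$ is coercive and its minimum is attained. I expect Step 2 to be the delicate step, specifically verifying the closed-form worst-case covariance and its lower bound. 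Assumption~\ref{assm:process_noise_cost} guarantees $W\neq0$ and thereby avoids degeneracy in the multiplier argument.
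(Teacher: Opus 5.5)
Your proposal is correct and follows essentially the paper's proof. The restriction to $\mathcal{M}^+$ comes from the closed-form worst-case covariance $\gamma^{\star 2}(\gamma^\star I-\Psi)^{-1}\hat\Sigma(\gamma^\star I-\Psi)^{-1}\succeq\lambda_{\min}(\hat\Sigma)I$ of Proposition~\ref{prop:linear_moment_objective_gelbrich_constraint}, with Assumption~\ref{assm:process_noise_cost} ensuring $\Psi\neq 0$, and the outer minimum is attained by coercivity obtained from evaluating at the nominal moments $(\hat\mu,\hat\Sigma+\hat\mu\hat\mu^\top)$; the only differences are cosmetic (you obtain the inner maximum from compactness of $\mathcal{M}^+$ rather than from that proposition's attainment claim, and you control the $q$-direction via the full column rank of $L$ rather than via the $R$-term alone).
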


\subsection{Lower Bound on \eqref{eq:DDRC}}
We call $\mathcal{T}:\mathbb{R}^{N_\xi} \to \mathbb{R}^{N_\xi}$ a positive definite (PD) affine map if $\mathcal{T}(\xi) = P\xi + b$ for some $P \in \mathbb{S}_{++}^{N_\xi}$ and $b\in \mathbb{R}^{N_\xi}$.
By defining the restriction of $\mathcal{W}$ given by
\[
    \mathcal{W}_\# := \left\{\mathcal{T}_\#\hat{\mathbb{P}}: \mathcal{T} \text{ is a PD affine map}, \ \mathbb{W}(\mathcal{T}_\#\hat{\mathbb{P}}, \hat{\mathbb{P}})\leq\rho \right\},
\]
the following problem provides a lower bound on~\eqref{eq:DDRC}:
\begin{equation}
    \label{eq:pre_DDRCl}
    \begin{aligned}
      \sup_{\mathbb{P} \in \mathcal{W}_\#} \quad \inf_{u,x} \quad &\mathbb{E}_{\mathbb{P}}\left[x^\top Qx + u^\top Ru\right]\\[-1ex]
    \text{s.t.} \quad & u \in \mathcal{N}_\eta, \ x = Hu + D\xi.
    \end{aligned}
\end{equation}
If $\hat{\mathbb{P}}$ is elliptically contoured, then so is every other $\mathbb{P} \in \mathcal{W}_\#$ \cite[Lemma 3.1]{hult2002multivariate}. In this case, the inner minimization problem in \eqref{eq:pre_DDRCl} constitutes a stochastic LQG problem that is solved by an affine purified output-feedback policy. These insights can be used to prove the following proposition.

\begin{proposition}
    \label{prop:lb}
    If Assumptions \ref{assm:P_hat_positive_covariance} and \ref{assm:P_hat_elliptical} hold, then the dual DRLQ-C problem \eqref{eq:DDRC} is bounded below by
    \begin{equation}
    \label{eq:DDRCl}
    \tag{$\mathcal{D}_l$}
    \begin{aligned}
      \max_{(\mu, M) \in \mathcal{M}^+} \min_{(q,U) \in \mathcal{U}} \ J(q,U, \mu, M),
    \end{aligned}
\end{equation}
where both the maximum and the minimum are attained.~$\hfill\Diamond$
\end{proposition}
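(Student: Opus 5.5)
The plan is to show that the restricted dual problem \eqref{eq:pre_DDRCl} is itself a lower bound on \eqref{eq:DDRC}, and then that its optimal value equals \eqref{eq:DDRCl}.

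\textbf{Step 1 (reduction to \eqref{eq:pre_DDRCl}).} Since $\mathcal{W}_\#\subseteq\mathcal{W}$, restricting Nature's choices gives immediately that \eqref{eq:pre_DDRCl} is bounded above by \eqref{eq:DDRC}. It therefore suffices to show that \eqref{eq:pre_DDRCl} equals, or dominates, \eqref{eq:DDRCl}.

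\textbf{Step 2 (parametrizing $\mathcal{W}_\#$ by moments).} For a PD affine map $\mathcal{T}(\xi)=P\xi+b$, the pushforward $\mathcal{T}_\#\hat{\mathbb{P}}$ has mean $\mu=P\hat\mu+b$ and covariance $\Sigma=P\hat\Sigma P$. By \cite[Lemma 3.1]{hult2002multivariate} it is elliptical with the same generator as $\hat{\mathbb{P}}$. For two elliptical distributions from the same family, the Gelbrich bound is tight, so $\mathbb{W}(\mathcal{T}_\#\hat{\mathbb{P}},\hat{\mathbb{P}})=\mathbb{G}((\mu,\Sigma),(\hat\mu,\hat\Sigma))$. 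This follows from the optimal-transport characterization: the map $\xi\mapsto P\xi+b$ with $P\succ0$ is the gradient of a convex function, hence optimal by Brenier's theorem, and its cost is exactly the Gelbrich expression.

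Conversely, take any $(\mu,M)\in\mathcal{M}^+$ and set $\Sigma=M-\mu\mu^\top\succeq\lambda_{\min}(\hat\Sigma)I\succ0$. Define
\[
P=\hat\Sigma^{-1/2}\bigl(\hat\Sigma^{1/2}\Sigma\hat\Sigma^{1/2}\bigr)^{1/2}\hat\Sigma^{-1/2}\in\mathbb{S}^{N_\xi}_{++},
\qquad b=\mu-P\hat\mu .
\]
Then $P\hat\Sigma P=\Sigma$, and the pushforward lies in $\mathcal{W}_\#$. Hence every moment pair in $\mathcal{M}^+$ is realized by some $\mathbb{P}\in\mathcal{W}_\#$.

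\textbf{Step 3 (inner minimization).} Fix an elliptical $\mathbb{P}$ with moments $(\mu,M)$ and $\Sigma\succ0$. The decision maker then faces a finite-horizon quadratic problem with causal measurable policies in the purified outputs $\eta=F\xi$.

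The key fact is that for elliptical distributions, conditional expectations of one subvector given another are affine; this is a standard property, cf.\ \cite{hult2002multivariate}. The cost is quadratic, so the optimal causal policy can be computed by dynamic programming backward over $t$. Equivalently, one can apply the orthogonality principle: the optimal $u$ minimizes a quadratic in $u$ over the closed subspace $\mathcal{N}_\eta$, so it is characterized by projection conditions that involve only conditional expectations of affine functions of $\xi$ given $\eta_0,\dots,\eta_t$. These conditional expectations are affine, so the projection conditions admit an affine solution $u=UF\xi+q$ with $(q,U)\in\mathcal{U}$. Because the problem is convex, this affine solution is globally optimal among all of $\mathcal{N}_\eta$.

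The cleanest way to make this rigorous is to guess that the optimum is the affine minimizer, which exists because $J$ is a strictly convex quadratic in $q$ and $U$ when $R\succ0$ and $\Sigma\succ0$. One then verifies the first-order optimality conditions: for every $t$ and every square-integrable function $g$ of $(\eta_0,\dots,\eta_t)$, the directional derivative $\mathbb{E}[\,\cdot\,g]$ must vanish. These conditions reduce, through the affine conditional expectations, to the moment conditions that the affine minimizer already satisfies.

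I expect this step to be the main obstacle. The paper notes that the separation principle fails under temporal correlation, so the argument must rest on the affine-conditional-expectation property and not on any separation argument.

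\textbf{Step 4 (conclusion and attainment).} With Step 3, the inner infimum equals $\min_{(q,U)\in\mathcal{U}}J(q,U,\mu,M)$, because the expected cost of an affine policy depends only on $(\mu,M)$. By Step 2, the outer supremum over $\mathcal{W}_\#$ then covers at least $\mathcal{M}^+$, so \eqref{eq:pre_DDRCl} is at least \eqref{eq:DDRCl}.

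Attainment of the inner minimum follows from strict convexity and coercivity in $(q,U)$. For the outer maximum, $\mathcal{M}^+$ is compact: the Gelbrich ball bounds $\mu$ and $\Sigma$, and the constraint set is closed. The map $(\mu,M)\mapsto\min_{U}J$ is a pointwise minimum of functions that are affine in $(\mu,M)$, hence upper semicontinuous (indeed concave). The maximum is therefore attained.
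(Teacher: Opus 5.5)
Your proposal is correct and follows the paper's proof. Both arguments restrict Nature to PD affine push-forwards of $\hat{\mathbb{P}}$ and use ellipticity to show that the inner problem is solved by an affine policy $u=UF\xi+q$. Both then realize every $(\mu,M)\in\mathcal{M}^+$ as the push-forward under $\mathcal{T}(\xi)=P(\xi-\hat\mu)+\mu$, whose Wasserstein distance to $\hat{\mathbb{P}}$ equals the Gelbrich distance, and obtain attainment from compactness of $\mathcal{M}^+$ and upper semicontinuity of the value function.

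The differences are small. You derive from first principles two facts the paper cites: affine optimality under elliptical noise, which you get via affine conditional expectations and orthogonality conditions where the paper cites Hadjiyiannis et al., and $\mathbb{W}=\mathbb{G}$, which you get via optimality of gradients of convex functions where the paper cites Nguyen et al. For the latter, only the sufficiency direction is needed, and it holds even when $\hat{\mathbb{P}}$ is not absolutely continuous, so your appeal to Brenier's theorem is fine. You also prove only the inequality actually required, whereas the paper shows equality between the problems over $\mathcal{W}_\#^+$ and $\mathcal{G}^+$.
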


We emphasize that, in contrast to the extant DRLQ literature, we cannot invoke the separation principle to establish affine optimality in the inner minimization problem in~\eqref{eq:pre_DDRCl}.

\subsection{Strong Duality and Primal-Dual Solvability}

We are now ready to present the main result. Note that
\begin{equation}
\label{eq:bounds}
\sup \eqref{eq:DDRCl} \leq \sup\eqref{eq:DDRC} \leq \inf \eqref{eq:DRC} \leq \inf \eqref{eq:DRCu},
\end{equation}
where the inequalities follow from Proposition \ref{prop:lb}, weak duality and Proposition \ref{prop:ub}, respectively. Problems \eqref{eq:DRCu} and \eqref{eq:DDRCl} differ only in the order of minimization and maximization. Next, we show that the optimal values of \eqref{eq:DRCu} and \eqref{eq:DDRCl} coincide, which results in the following main theorem.
% The theorem below asserts that strong duality holds, affine policies are optimal in \eqref{eq:DRC} and PSD affine push-forwards of $\hat{\mathbb{P}}$ are optimal in \eqref{eq:DDRC}.
%  The next theorem shows that the primal-dual pairs \eqref{eq:DRCu}, \eqref{eq:DDRCl} and \eqref{eq:DRC}, \eqref{eq:DDRC} both enjoy strong duality. Moreover, \eqref{eq:DRC} is solved by a causal affine policy, while \eqref{eq:DDRC} is solved by a positive semidefinite affine push-forward of $\hat{\mathbb{P}}$.
% % , i.e., an elliptical distribution of the same type.
% Consequently, these solutions form a Nash equilibrium of the zero-sum game associated with \eqref{eq:DRC}.

\begin{theorem}
\label{thm:equality}
    If Assumptions \ref{assm:process_noise_cost}, \ref{assm:P_hat_positive_covariance} and \ref{assm:P_hat_elliptical} hold, then the inequalities in \eqref{eq:bounds} collapse to equalities, and the following~hold.
    \begin{enumerate}
    \renewcommand{\labelenumi}{(\roman{enumi})}
        \item Strong duality holds, that is, $\inf\eqref{eq:DRC} = \sup\eqref{eq:DDRC}$.
        \item If $(q^\star, U^\star)$ solves \eqref{eq:DRCu} and $G^\star = (I + U^\star CH)^{-1}$, then problem \eqref{eq:DRC} is solved by
        \begin{align*}u^\star = U^\star \eta + q^\star = G^\star (U^\star y + q^\star).
        \end{align*}
        \item If $(\mu^\star, M^\star)$ solves \eqref{eq:DDRCl}, $\Sigma^\star = M^\star - \mu^\star \mu^{\star\top}$ and $\mathcal{T}(\xi) = \hat{\Sigma}^{-\frac{1}{2}}(\hat{\Sigma}^{\frac{1}{2}}\Sigma^\star\hat{\Sigma}^{\frac{1}{2}})^\frac{1}{2}\hat{\Sigma}^{-\frac{1}{2}}(\xi - \hat{\mu}) + \mu^\star$, then problem \eqref{eq:DDRC} is solved by $\mathbb{P}^\star = \mathcal{T}_\#\hat{\mathbb{P}} \sim (\mu^\star, \Sigma^\star)$.~$\hfill\Diamond$
    \end{enumerate}
\end{theorem}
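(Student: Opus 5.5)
The plan is to show that the finite-dimensional minimax problems \eqref{eq:DRCu} and \eqref{eq:DDRCl} have equal optimal values by a minimax theorem. The chain \eqref{eq:bounds} then collapses to equalities, and parts (i)–(iii) follow by reading off which feasible points attain the common value.

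\textbf{Step 1: convex–concave structure.} I will verify the hypotheses of Sion's minimax theorem for $J$ on $\mathcal{U}\times\mathcal{M}^+$.
\begin{itemize}
\item[] \emph{Linearity in $(\mu,M)$.} For fixed $(q,U)$, $J$ is affine in $(\mu,M)$.
\item[] \emph{Convexity in $(q,U)$.} For fixed $(\mu,M)$ with $\Sigma=M-\mu\mu^\top\succeq 0$, we can write $J=\Tr(K(U)^\top K(U)\Sigma)+\|K(U)\mu+Lq\|_2^2$. Since $K(U)$ is affine in $U$, this is a convex quadratic in $(q,U)$, and $\mathcal{U}$ is a linear subspace.
\item[] \emph{Convexity and compactness of $\mathcal{M}^+$.} This is the main obstacle. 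I would use the known fact that the Gelbrich moment set $\mathcal{M}$, written in $(\mu,M)$ coordinates, is convex and compact; see the DRO monograph cited as \cite{kuhn2024distributionallyrobustoptimization}. Convexity holds because $(\mu,M)\mapsto \|\mu-\hat\mu\|^2+\Tr(M-\mu\mu^\top)-2\Tr((\hat\Sigma^{1/2}(M-\mu\mu^\top)\hat\Sigma^{1/2})^{1/2})$ simplifies to $\Tr M-2\hat\mu^\top\mu+\|\hat\mu\|^2+\Tr\hat\Sigma$ minus a term that is concave in $(\mu,M)$. That term is concave because the map $(\mu,M)\mapsto M-\mu\mu^\top$ is matrix concave and $\Sigma\mapsto\Tr(\hat\Sigma^{1/2}\Sigma\hat\Sigma^{1/2})^{1/2}$ is concave and monotone. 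The extra constraint $M-\mu\mu^\top\succeq\lambda_{\min}(\hat\Sigma)I$ is an LMI via a Schur complement, hence convex.
\end{itemize}
Sion's theorem then gives $\min_{\mathcal U}\max_{\mathcal M^+}J=\max_{\mathcal M^+}\min_{\mathcal U}J$. Attainment is already supplied by Propositions~\ref{prop:ub} and~\ref{prop:lb}. Consequently every inequality in \eqref{eq:bounds} is an equality, which proves (i).

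\textbf{Step 2: primal optimality, part (ii).} Let $(q^\star,U^\star)$ solve \eqref{eq:DRCu}, and set $u^\star=U^\star\eta+q^\star$.
\begin{itemize}
\item[] \emph{Feasibility.} Since $U^\star$ is block lower triangular, $u^\star\in\mathcal N_\eta$.
\item[] \emph{Optimality.} Because $\mathcal W\subseteq\mathcal G$ and the expected cost under $\mathbb P\sim(\mu,\Sigma)$ equals $J(q^\star,U^\star,\mu,M)$, we get $\sup_{\mathcal W}\mathbb E[\cdot]\le\max_{\mathcal M}J$. We must also check that the maximum over $\mathcal M$ equals the maximum over $\mathcal M^+$; this is exactly the reduction used in Proposition~\ref{prop:ub}. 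Hence the worst-case cost of $u^\star$ is at most $\min\eqref{eq:DRCu}=\inf\eqref{eq:DRC}$, so $u^\star$ solves \eqref{eq:DRC}.
\item[] \emph{Output-feedback form.} Substituting $\eta=y-CHu$ (from $y=C(Hu+D\xi)+E\xi$) gives $u=U^\star y-U^\star CHu+q^\star$. The matrix $U^\star CH$ is strictly block lower triangular because $H$ is strictly causal, so $I+U^\star CH$ is invertible. Solving yields $u^\star=G^\star(U^\star y+q^\star)$.
\end{itemize}

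\textbf{Step 3: dual optimality, part (iii).} Let $(\mu^\star,M^\star)$ solve \eqref{eq:DDRCl}.
\begin{itemize}
\item[] \emph{$\mathcal T$ is a PD affine map.} Since $\Sigma^\star\succeq\lambda_{\min}(\hat\Sigma) I\succ 0$, the matrix $P=\hat\Sigma^{-1/2}(\hat\Sigma^{1/2}\Sigma^\star\hat\Sigma^{1/2})^{1/2}\hat\Sigma^{-1/2}$ is positive definite.
\item[] \emph{Moments of $\mathbb P^\star$.} A direct computation gives $P\hat\Sigma P=\Sigma^\star$, so $\mathbb P^\star=\mathcal T_\#\hat{\mathbb P}\sim(\mu^\star,\Sigma^\star)$.
\item[] \emph{Membership in $\mathcal W_\#$.} $\mathcal T$ is the gradient of a convex quadratic, hence an optimal transport map. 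Therefore $\mathbb W(\mathbb P^\star,\hat{\mathbb P})$ equals the Gelbrich distance, which is at most $\rho$, and so $\mathbb P^\star\in\mathcal W_\#\subseteq\mathcal W$.
\item[] \emph{Optimality.} As argued for Proposition~\ref{prop:lb}, $\mathbb P^\star$ is elliptical, so the inner infimum in \eqref{eq:DDRC} is attained by an affine policy and equals $\min_{\mathcal U}J(\cdot,\mu^\star,M^\star)=\max\eqref{eq:DDRCl}=\sup\eqref{eq:DDRC}$. Hence $\mathbb P^\star$ solves \eqref{eq:DDRC}.
\end{itemize}
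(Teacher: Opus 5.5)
Your proposal is correct and follows the same route as the paper. Sion's minimax theorem applied to $J$ on $\mathcal{U}\times\mathcal{M}^+$ collapses the chain \eqref{eq:bounds}. Part (ii) then follows by bounding the worst-case cost of the affine policy via $\mathcal{W}\subseteq\mathcal{G}$, and part (iii) by invoking affine optimality under the elliptical push-forward $\mathbb{P}^\star$. You also spell out, correctly, several points that the paper only cites or leaves implicit: the convexity of $\mathcal{M}$ in $(\mu,M)$ coordinates, the strict block-lower-triangularity that makes $I+U^\star CH$ invertible, and the membership $\mathbb{P}^\star\in\mathcal{W}$, which holds because $\mathcal{T}$ is an optimal transport map whose cost equals the Gelbrich distance.
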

\vspace{0.5em}
Theorem \ref{thm:equality} implies that the pair $(u^\star, \mathbb{P}^\star)$ forms a Nash equilibrium of the zero-sum game associated with~\eqref{eq:DRC}. Observe that (ii) expresses the optimal policy both in terms of the purified outputs $\eta$ and the original outputs $y$. Regarding (iii), note that an affine push-forward of an elliptically contoured distribution $\hat{\mathbb{P}}$ remains within the same family of elliptically contoured distributions; see \cite[Lemma~3.1]{hult2002multivariate}. In particular, if $\hat{\mathbb{P}} = \mathcal{N}(\hat{\mu}, \hat{\Sigma})$, then $\mathbb{P}^\star = \mathcal{N}(\mu^\star, \Sigma^\star)$.

%%%%%%%%%%%%%%%%%%%%%%%%%%%%%%%%%%%%%%%%%%%%%%%%%%%%%%%%%%%%%%%%%%%%%%%%%%%%%%%%
\section{Iterated Best-Response Algorithm}
\label{sec:best_response}

We now show that the solution to the upper bounding problem \eqref{eq:DRCu} is the best response to the solution of its dual \eqref{eq:DDRCl} and vice versa. We further show that any solution of~\eqref{eq:DDRCl} allows us to construct solutions of the original primal and dual problems~\eqref{eq:DRC} and~\eqref{eq:DDRC}. We propose to solve problem \eqref{eq:DDRCl} using a Frank–Wolfe algorithm, which admits an equivalent interpretation as an iterated best-response algorithm.

We say that $(q,U)$ is the decision maker's best response to $(\mu, M)$ if $(q,U) \in \argmin_{(q,U) \in \mathcal{U}}J(q,U, \mu, M)$. The next proposition characterizes the decision maker’s best response to $(\mu, M)$ as the solution of a linear system and shows that \eqref{eq:DRCu} is solved by the best response to some $(\mu^\star, M^\star)$.
% \begin{corollary}
% \label{cor:optimal_controller_given_mean_covariance}
%     Let Assumptions \ref{assm:P_hat_positive_covariance} and \ref{assm:P_hat_elliptical} hold and let $(\mu^\star, M^\star)$ solve problem \eqref{eq:DDRCl}, with $\Sigma^\star = M^\star - \mu^\star \mu^{\star\top}$. Then the optimal controller $u^*$ solving problem \eqref{eq:DRC} is given by the causal affine policy $u^* = U^*\eta + q^*$, where
% \begin{equation}
%     \begin{aligned}
% \label{eq:primal_best_response}
%     &q^* = -(U^*F + (R+H^\top Q H)^{-1}H^\top QD)\mu^*\\
%     &U^* = \mathrm{vec^{-1}}\left(\left[J^{-1}Z^\top\left(ZJ^{-1}Z^\top\right)^{-1}ZJ^{-1} - J^{-1})\right]C\right),
% \end{aligned}
% \end{equation}
% where $J := \left(F\Sigma^*F^\top\right) \otimes \left(R + H^\top Q H\right)$, $C := \mathrm{vec}(H^\top QD\Sigma^*F^\top)$ and $Z$ is a full row rank selection matrix that encodes block-lower-triangularity of $U$ with the constraint $Z\mathrm{vec}(U) = 0$. Moreover, $u^*$ can be reparametrized as the causal affine output-feedback controller $u^* = (I + U^*CH)^{-1}U^* y + (I + U^*CH)^{-1}q$.
% \end{corollary}
\begin{proposition}
\label{prop:optimal_controller_given_mean_covariance}
    Assume that $(\mu, M) \in \mathcal{M}^+$, and define $\Sigma = M - \mu \mu^\top$. Then, the decision maker's unique best response $(q^\star,U^\star) \in \mathcal{U}$ to $(\mu, M)$ is given by
\begin{equation}
    \begin{aligned}
\label{eq:primal_best_response}
    &U^\star = \mathrm{vec^{-1}}\left(S \theta^\star\right),\\
    &q^\star = -(U^\star F + (R+H^\top Q H)^{-1}H^\top QD)\mu,
\end{aligned}
\end{equation}
where $S$ is the embedding matrix satisfying $\mathrm{vec}(U) = S\theta$, $\theta$ collects all entries of the blocks $U_{t,s}$ for $0\leq s \leq t \leq T-1$ in \eqref{eq:blocklower}, and $\theta^\star$ is the unique solution to the linear equation
\vspace{-0.5em}

\small
\begin{equation}
\label{eq:theta}
S^{\top}\left[F\Sigma F^\top \otimes \left(R + H^\top Q H\right)\right]S \theta = -S^\top\mathrm{vec}(H^\top Q D \Sigma F^\top).
\end{equation}
\normalsize
Moreover, if Assumptions \ref{assm:process_noise_cost}, \ref{assm:P_hat_positive_covariance} and \ref{assm:P_hat_elliptical} hold and $(\mu^\star, M^\star)$ solves \eqref{eq:DDRCl}, then $(q^\star, U^\star)$ as given in \eqref{eq:primal_best_response} with $(\mu, M) = (\mu^\star, M^\star)$ is the unique solution of \eqref{eq:DRCu}.~$\hfill\Diamond$
% \begin{equation}
%     \begin{aligned}
% \label{eq:primal_best_response}
%     &(R+H^\top Q H)q^\star  = - (H^\top QD + (R+H^\top Q H)U^\star F)\mu^\star,\\
%     &\Pi_{\mathcal{L}}((R + H^\top Q H)U^\star F\Sigma^\star F^\top + H^\top Q D \Sigma F) = 0.
% \end{aligned}
% \end{equation}
% where $U^\star$ solves $J := \left(F\Sigma^*F^\top\right) \otimes \left(R + H^\top Q H\right)$, $C := \mathrm{vec}(H^\top QD\Sigma^*F^\top)$ and $Z$ is a full row rank selection matrix that encodes block-lower-triangularity of $U$ with the constraint $Z\mathrm{vec}(U) = 0$. Moreover, $u^*$ can be reparametrized as the causal affine output-feedback controller $u^* = (I + U^*CH)^{-1}U^* y + (I + U^*CH)^{-1}q$.
\end{proposition}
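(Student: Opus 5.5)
The plan is to treat the best-response problem $\min_{(q,U)\in\mathcal U} J(q,U,\mu,M)$ for fixed $(\mu,M)\in\mathcal M^+$ as a strictly convex quadratic program over a linear subspace, and read off its stationarity conditions. First, I will substitute $M=\Sigma+\mu\mu^\top$ and use $L^\top K(U)=R^{1/2}R^{1/2}UF+H^\top Q(HUF+D)=(R+H^\top QH)UF+H^\top QD$. Then $J$ should decompose as
\begin{equation*}
J=\Tr\bigl(K(U)^\top K(U)\Sigma\bigr)+\norm{K(U)\mu+Lq}_2^2 .
\end{equation*}
For fixed $U$, the minimization over $q\in\mathbb R^{N_u}$ is unconstrained, since $q$ carries no causality restriction. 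Because $L^\top L=R+H^\top QH\succ0$ (as $R\succ0$), the unique minimizer solves $L^\top L q=-L^\top K(U)\mu$. This gives exactly $q^\star=-(UF+(R+H^\top QH)^{-1}H^\top QD)\mu$, and the second term becomes $\mu^\top K^\top(I-L(L^\top L)^{-1}L^\top)K\mu$.

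The key observation for the $U$-problem is that $(I-\Pi_L)K(U)=(I-\Pi_L)\begin{bmatrix}0\\ Q^{1/2}D\end{bmatrix}$ does not depend on $U$, because $K(U)=LUF+\begin{bmatrix}0\\Q^{1/2}D\end{bmatrix}$. So after eliminating $q$, only the first term depends on $U$. Writing $\Tr(K^\top K\Sigma)$ with $\mathrm{vec}(U)=S\theta$, I will use $\mathrm{vec}(LUF)=(F^\top\otimes L)\mathrm{vec}(U)$. This turns the first term into the quadratic
\begin{equation*}
\theta^\top S^\top\bigl[F\Sigma F^\top\otimes(R+H^\top QH)\bigr]S\theta+2\theta^\top S^\top\mathrm{vec}(H^\top QD\Sigma F^\top)+\text{const}.
\end{equation*}
Its first-order condition is precisely \eqref{eq:theta}.

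Uniqueness requires the Hessian to be positive definite. Since $(\mu,M)\in\mathcal M^+$, we have $\Sigma\succeq\lambda_{\min}(\hat\Sigma)I\succ0$. I expect $F$ to have full row rank: the $v$-components enter $\eta$ through an identity block, because $E$ selects $v_t$ (Appendix~\ref{appdx:matrices}). Hence $F\Sigma F^\top\succ0$, the Kronecker product with $R+H^\top QH\succ0$ is positive definite, and $S$ has full column rank as a selection embedding. The Hessian is therefore positive definite, the minimizer is unique, and together with the unique $q^\star$ this establishes the first part. Verifying the full-row-rank claim on $F$ from the appendix structure is the one step I must check concretely.

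For the second part, let $(\mu^\star,M^\star)$ solve \eqref{eq:DDRCl}. By Theorem~\ref{thm:equality}, or the collapse of the bounds it proves, $\min\max J=\max\min J$ with both attained, so $(\mu^\star,M^\star)$ together with any solution $(\bar q,\bar U)$ of \eqref{eq:DRCu} forms a saddle point. The standard argument runs as follows:
\begin{equation*}
J(\bar q,\bar U,\mu^\star,M^\star)\le\max_{\mathcal M^+}J(\bar q,\bar U,\cdot)=\operatorname{val}=\min_{\mathcal U}J(\cdot,\mu^\star,M^\star)\le J(\bar q,\bar U,\mu^\star,M^\star).
\end{equation*}
This shows $(\bar q,\bar U)$ minimizes $J(\cdot,\mu^\star,M^\star)$. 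By the uniqueness established above, $(\bar q,\bar U)=(q^\star,U^\star)$. Since \eqref{eq:DRCu} has a solution by Proposition~\ref{prop:ub}, that solution is unique and equals the best response.

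The main obstacle is the saddle-point step: it relies on the equality of values from Theorem~\ref{thm:equality}, whose proof I am taking as given.
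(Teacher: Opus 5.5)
Your proposal is correct and follows the same route as the paper's proof. You eliminate $q$ in closed form, vectorize the remaining quadratic via $\mathrm{vec}(U)=S\theta$, and get uniqueness from positive definiteness of $F\Sigma F^\top\otimes(R+H^\top QH)$ (using $\Sigma\succ 0$ on $\mathcal{M}^+$, full row rank of $F$, and $R\succ 0$); for the second assertion you combine strong duality from Theorem~\ref{thm:equality}(i), solvability of \eqref{eq:DRCu} from Proposition~\ref{prop:ub}, and uniqueness of the best response. Beyond that, you only make explicit steps the paper leaves implicit: the $U$-independence of the leftover $\mu$-term after eliminating $q$, the full-row-rank check on $F$ through the identity blocks acting on $v_t$, and the saddle-point chain of inequalities.
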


We say that $(\mu, M)$ is Nature's best response to $(q,U)$ if $(\mu,M) \in \argmax_{(\mu,M) \in \mathcal{M}^+}J(q,U, \mu, M)$.  Before characterizing Nature's best response, we first provide a quasi closed-form solution of an auxiliary optimization problem.
\begin{proposition}[Extension of {\cite[Lemma 3]{nguyen2021meancov}}]
\label{prop:linear_moment_objective_gelbrich_constraint}
    Consider the optimization problem
\begin{equation}
\label{eq:linear_moment_objective_gelbrich_constraint}
    \begin{aligned}\sup_{\mu \in \mathbb{R}^n, \Sigma \succeq 0} \quad  &\mathrm{Tr}\left[\Psi(\Sigma+\mu \mu^\top)\right] + 2\psi^\top \mu \\[-1ex] \mathrm{s.t.} \quad \quad & \mathbb{G}((\mu, \Sigma), (\hat{\mu}, \hat{\Sigma}))\leq \rho
    \end{aligned}
\end{equation}
where $\Psi \in \mathbb{S}_+^n$ and $\psi \in \mathbb{R}^n$.
If $\Psi \neq 0$, $\hat{\Sigma} \in \mathbb{S}_{++}^n$ and $\rho>0$, then problem \eqref{eq:linear_moment_objective_gelbrich_constraint} has a unique maximizer $(\mu^\star, \Sigma^\star)$ given~by
\begin{equation}
\label{eq:solution_dual} 
\begin{aligned}&\mu^\star = (\gamma^\star I_{n} - \Psi)^{-1}( \gamma^\star\hat{\mu} + \psi),\\
&\Sigma^\star=\gamma^{\star 2}\left(\gamma^\star I_n-\Psi\right)^{-1} \hat{\Sigma}\left(\gamma^\star I_n-\Psi\right)^{-1},
\end{aligned}\end{equation}
where $\gamma^\star$ is the unique minimizer of 
\begin{equation}
\label{eq:dual_linear_moment_objective_gelbrich_constraint}
\begin{aligned}
&\inf _{\substack{\gamma>\lambda_{\max }(\Psi)}} (\gamma \hat{\mu} + \psi)^\top (\gamma I_n - \Psi)^{-1}(\gamma \hat{\mu} + \psi) \\
+ \, &\gamma^2\mathrm{Tr}\left[\left(\gamma I_n -\Psi\right)^{-1}\hat{\Sigma}\right]
+ \gamma\left(\rho^2 - \hat{\mu}^\top \hat{\mu} - \mathrm{Tr}(\hat{\Sigma})\right).
\end{aligned}
\end{equation}
Equivalently, $\gamma^\star$ is the unique solution in the interval $\left(\lambda_{\max }(\Psi), \infty\right)$ of the first-order optimality condition
\begin{equation}
\label{eq:optimality_condition_dual_linear_moment_objective_gelbrich_constraint}
\begin{aligned}
\rho^2&-\left\|(\gamma^\star I_n - \Psi)^{-1}(\gamma^\star \hat{\mu} + \psi)
- \hat{\mu}\right\|_2^2 \\& - \mathrm{Tr}\left[\hat{\Sigma}(I_n -\gamma^\star\left(\gamma^\star I_n -\Psi\right)^{-1})^2\right]=0.
\end{aligned}
\end{equation}     
% $\epsilon^2-\|\left(\gamma^* I - D\right)^{-1}(\gamma^* \hat{\mu} + d)
% - \hat{\mu}\|_2^2  - \mathrm{Tr}[\hat{\Sigma}\left(I-\gamma^*\left(\gamma^* I-D\right)^{-1}\right)^2]=0$
 In particular, we have $\Sigma^\star \succeq \lambda_{\min}(\hat{\Sigma})I_n$. Moreover, %$\gamma^*$ satisfies the bounds
 $\underline{\gamma} \leq \gamma^\star \leq \overline{\gamma}$, where
\begin{equation}
\label{eq:bounds_dual_linear_moment_objective_gelbrich_constraint}
\begin{aligned}
&\underline{\gamma} := \lambda_1 + \frac{1}{\rho}\sqrt{|v_1^\top(\lambda_1 \hat{\mu} + \psi)|^2 + \lambda_1^2 v_1^\top \hat{\Sigma} v_1}, \\
&  \overline{\gamma} := \lambda_1 + \frac{1}{\rho}\sqrt{\sum_{i=1}^n |v_i^\top(\lambda_i \hat{\mu} + \psi)|^2 + \lambda_i^2 v_i^\top \hat{\Sigma} v_i},
\end{aligned}
\end{equation}
% $\underline{\gamma} := \lambda_1 + \frac{1}{\epsilon}\sqrt{|v_1^\top(\lambda_1 \hat{\mu} + d)|^2 + \lambda_1^2 v_1^\top \hat{\Sigma} v_1}$ and $\overline{\gamma} \lambda_1 + \frac{1}{\epsilon}\sqrt{\sum_{i=1}^n |v_i^\top(\lambda_i \hat{\mu} + d)|^2 + \lambda_i^2 v_i^\top \hat{\Sigma} v_i}$.
with $\lambda_i$ denoting the $i$-th largest eigenvalue of $\Psi$, and $\{v_i\}_{i=1}^n$ a corresponding orthonormal basis of eigenvectors.~$\hfill\Diamond$
\end{proposition}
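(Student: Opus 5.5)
The approach is Lagrangian duality for the Gelbrich-constrained problem. Substituting the Bures form of the Gelbrich distance, the constraint reads
\[
\|\mu-\hat\mu\|_2^2+\Tr[\Sigma+\hat\Sigma-2(\hat\Sigma^{1/2}\Sigma\hat\Sigma^{1/2})^{1/2}]\le\rho^2 .
\]
The first step is to recall that the maximization over $\Sigma$ can be convexified. Write $\Tr[(\hat\Sigma^{1/2}\Sigma\hat\Sigma^{1/2})^{1/2}]$ as the maximum of $\Tr[C]$ over cross-covariances $C$ with $\begin{bmatrix}\Sigma & C\\ C^\top & \hat\Sigma\end{bmatrix}\succeq 0$. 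This turns \eqref{eq:linear_moment_objective_gelbrich_constraint} into a convex program in $(\mu,\Sigma,C)$: it has a linear objective in $\Sigma$, a concave quadratic constraint in $\mu$, and a linear matrix inequality. Since $\rho>0$, the point $(\hat\mu,\hat\Sigma,\hat\Sigma)$ is strictly feasible, so Slater's condition gives strong duality with a multiplier $\gamma\ge 0$.

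The second step is to compute the dual function. For fixed $\gamma$ the Lagrangian separates into a $\mu$-part and a $\Sigma$-part:
\[
\mu^\top(\Psi-\gamma I)\mu+2(\psi+\gamma\hat\mu)^\top\mu
\quad\text{and}\quad
\Tr[(\Psi-\gamma I)\Sigma]+2\gamma\Tr[(\hat\Sigma^{1/2}\Sigma\hat\Sigma^{1/2})^{1/2}].
\]
Both parts are unbounded unless $\gamma>\lambda_{\max}(\Psi)$, except possibly at $\gamma=\lambda_{\max}(\Psi)$. For $\gamma>\lambda_{\max}(\Psi)$ the $\mu$-part is maximized at $(\gamma I-\Psi)^{-1}(\gamma\hat\mu+\psi)$. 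For the $\Sigma$-part, substitute $\Sigma=\hat\Sigma^{-1/2}X^2\hat\Sigma^{-1/2}$ with $X\succeq 0$ and complete the square, exactly as in \cite[Lemma 3]{nguyen2021meancov}. This gives the maximizer $\Sigma=\gamma^2(\gamma I-\Psi)^{-1}\hat\Sigma(\gamma I-\Psi)^{-1}$ with value $\gamma^2\Tr[(\gamma I-\Psi)^{-1}\hat\Sigma]$. Adding the constant terms $\gamma(\rho^2-\|\hat\mu\|^2-\Tr\hat\Sigma)$ yields exactly the objective of \eqref{eq:dual_linear_moment_objective_gelbrich_constraint}.

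The third step treats the boundary $\gamma\downarrow\lambda_1$, where $\lambda_1=\lambda_{\max}(\Psi)$. Because $\hat\Sigma\succ0$ and $\Psi\ne0$, so that $\lambda_1>0$, the term $\gamma^2 v_1^\top\hat\Sigma v_1/(\gamma-\lambda_1)$ blows up. The dual objective is therefore coercive at both ends of the interval, and its infimum is attained in the open interval $(\lambda_1,\infty)$. The objective is strictly convex there, since each term is of the form $a^2/(\gamma-\lambda_i)$ plus a linear function after the eigen-decomposition, with at least one strictly convex term. Hence $\gamma^\star$ is unique and is characterized by the stationarity condition, which differentiates to \eqref{eq:optimality_condition_dual_linear_moment_objective_gelbrich_constraint}. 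That condition is precisely complementary slackness: the Gelbrich constraint is active at the primal candidate \eqref{eq:solution_dual}.

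The fourth step establishes primal optimality and uniqueness. Primal optimality of \eqref{eq:solution_dual} follows because it attains the dual value and is feasible. For uniqueness, note that any maximizer must maximize the Lagrangian at $\gamma^\star$, and each part of the Lagrangian is strictly concave: the $\mu$-part because $\gamma^\star I-\Psi\succ0$, and the $X$-part after the substitution. I expect this uniqueness step to be the main obstacle, since the map $\Sigma\mapsto\Tr[(\hat\Sigma^{1/2}\Sigma\hat\Sigma^{1/2})^{1/2}]$ needs care; strict concavity is cleanest in the variable $X$.

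The remaining claims are direct checks. The bound $\Sigma^\star\succeq\lambda_{\min}(\hat\Sigma)I$ follows from $\gamma^\star(\gamma^\star I-\Psi)^{-1}\succeq I$, because $\Psi\succeq0$ implies $0<\gamma I-\Psi\preceq\gamma I$; sandwiching $\hat\Sigma\succeq\lambda_{\min}(\hat\Sigma)I$ by this matrix then gives the claim. For the bounds on $\gamma^\star$, rewrite \eqref{eq:optimality_condition_dual_linear_moment_objective_gelbrich_constraint} in the eigenbasis. Using $(\gamma I-\Psi)^{-1}(\gamma\hat\mu+\psi)-\hat\mu=(\gamma I-\Psi)^{-1}(\Psi\hat\mu+\psi)$ and $I-\gamma(\gamma I-\Psi)^{-1}=-(\gamma I-\Psi)^{-1}\Psi$, the condition becomes
\[
\rho^2=\sum_i\frac{|v_i^\top(\lambda_i\hat\mu+\psi)|^2}{(\gamma-\lambda_i)^2}+\Tr\big[\hat\Sigma(\gamma I-\Psi)^{-1}\Psi^2(\gamma I-\Psi)^{-1}\big].
\]
The right-hand side is decreasing in $\gamma$. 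For the lower bound, keep only the $i=1$ contributions, which gives $\rho^2\ge(|v_1^\top(\lambda_1\hat\mu+\psi)|^2+\lambda_1^2v_1^\top\hat\Sigma v_1)/(\gamma-\lambda_1)^2$. For the upper bound, bound every $(\gamma-\lambda_i)^{-1}$ by $(\gamma-\lambda_1)^{-1}$. The trace term has off-diagonal eigen-contributions, so both bounds require inequalities such as $\Tr[\hat\Sigma A\Psi^2A]\ge\lambda_1^2(\gamma-\lambda_1)^{-2}v_1^\top\hat\Sigma v_1$, where $A=(\gamma I-\Psi)^{-1}$. I would verify these using $A\Psi^2A\succeq$ its $v_1$-component and $A\Psi^2A\preceq(\gamma-\lambda_1)^{-2}\Psi^2$. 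Solving each inequality for $\gamma$ gives $\underline{\gamma}$ and $\overline{\gamma}$.
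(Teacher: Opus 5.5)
Your proof is correct in substance, but it is more self-contained than the paper's. The paper does not rederive the maximizer. It cites \cite[Lemma 3]{nguyen2021meancov} for the form and uniqueness of $(\mu^\star,\Sigma^\star)$ and for the characterization of $\gamma^\star$. It cites \cite[Proposition A.4]{nguyen2021mean} for $\Sigma^\star\succeq\lambda_{\min}(\hat\Sigma)I_n$. It then proves only the new bounds on $\gamma^\star$, using the same eigen-decomposition of the first-order condition that you use.

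You instead reprove the cited lemma through the Lagrangian. The substitution $X=(\hat\Sigma^{1/2}\Sigma\hat\Sigma^{1/2})^{1/2}$ turns the $\Sigma$-part into the strictly concave quadratic $-\mathrm{Tr}[XBX]+2\gamma\,\mathrm{Tr}\,X$ with $B=\hat\Sigma^{-1/2}(\gamma I_n-\Psi)\hat\Sigma^{-1/2}\succ 0$. You also replace the second citation by the direct Loewner argument $\gamma^\star(\gamma^\star I_n-\Psi)^{-1}\succeq I_n$. This gives a first-principles proof. Your eigen-rewrite of the dual objective works out to $\rho^2\gamma+\mathrm{const}+\sum_i w_i/(\gamma-\lambda_i)$, with $w_i=|v_i^\top(\lambda_i\hat\mu+\psi)|^2+\lambda_i^2 v_i^\top\hat\Sigma v_i$. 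From this one expression you get:
\begin{itemize}
\item strict convexity of the dual objective;
\item existence of $\gamma^\star$, where the slope $\rho^2>0$ is what gives coercivity at $+\infty$ (you assert this end without justification);
\item the form $\rho^2=\sum_i w_i(\gamma^\star-\lambda_i)^{-2}$ of the first-order condition, from which both bounds are immediate.
\end{itemize}

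Two corrections.

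\textbf{The convexity claim in your first step is false.} After introducing $C$, the objective still contains $\mu^\top\Psi\mu$, a convex function that is being maximized. So the program in $(\mu,\Sigma,C)$ is not convex, and Slater's theorem does not apply as stated. Convexity does hold in the variables $(\mu,M)$ with $M=\Sigma+\mu\mu^\top$, where the objective is linear. The step is not load-bearing, though. Your fourth step only needs two facts:
\begin{itemize}
\item weak duality, i.e., the primal supremum is at most $g(\gamma^\star)$, where $g$ is the dual function;
\item the unique Lagrangian maximizer at $\gamma^\star$ is feasible with the constraint active, and hence attains $g(\gamma^\star)$.
\end{itemize}
This sufficiency argument, and the uniqueness argument built on it, need no convexity. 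You can simply drop the Slater step.

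\textbf{There are no off-diagonal contributions in the trace term.} We have $(\gamma I_n-\Psi)^{-1}\Psi^2(\gamma I_n-\Psi)^{-1}=\sum_i\lambda_i^2(\gamma-\lambda_i)^{-2}v_iv_i^\top$. Hence the trace equals $\sum_i\lambda_i^2(\gamma-\lambda_i)^{-2}v_i^\top\hat\Sigma v_i$ exactly, as in the paper. The matrix inequalities you planned to verify are true but unnecessary.
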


% Problem \eqref{eq:linear_moment_objective_gelbrich_constraint} maximizes a function involving the convex quadratic term $\mu^\top \Psi \mu$ and is thus non-convex.
% A convex variant of problem \eqref{eq:linear_moment_objective_gelbrich_constraint} with $\mu = 0$ was studied in \cite[Proposition~A.4]{nguyen2021mean}.
Proposition~\ref{prop:linear_moment_objective_gelbrich_constraint} extends \cite[Lemma~3]{nguyen2021meancov} by providing the bounds on $\gamma^\star$ in \eqref{eq:bounds_dual_linear_moment_objective_gelbrich_constraint}. Problem \eqref{eq:linear_moment_objective_gelbrich_constraint} can therefore be solved efficiently using the following procedure. 
% Proposition~\ref{prop:linear_moment_objective_gelbrich_constraint} shows that problem \eqref{eq:linear_moment_objective_gelbrich_constraint} can be solved efficiently using the following procedure.
First, compute $\gamma^\star$ by solving $\eqref{eq:optimality_condition_dual_linear_moment_objective_gelbrich_constraint}$ via bisection over the interval $[\underline{\gamma}, \overline{\gamma}]$. Next, construct $(\mu^\star, \Sigma^\star)$ using \eqref{eq:solution_dual}. We now use Proposition \ref{prop:linear_moment_objective_gelbrich_constraint} to characterize Nature's best response to $(q, U)$.

\begin{proposition}
\label{prop:mean_covariance_given_optimal_controller}
If Assumption \ref{assm:process_noise_cost} holds, $\rho > 0$, $(q,U) \in \mathcal{U}$, and $(\mu^\star, \Sigma^\star)$ is the unique maximizer of \eqref{eq:linear_moment_objective_gelbrich_constraint} with $\Psi = K^\top(U)K(U)$ and $\psi = K^\top(U) L q$, then Nature's unique best response to $(q,U)$ is the pair $(\mu^\star, M^\star) \in \mathcal{M}^+$, where $M^\star = \Sigma^\star + \mu^\star\mu^{\star\top}$. Moreover, if Assumptions~\ref{assm:process_noise_cost}, \ref{assm:P_hat_positive_covariance} and~\ref{assm:P_hat_elliptical} hold and $(q^\star, U^\star)$ solves \eqref{eq:DRCu}, then $(\mu^\star, M^\star)$ defined as above with $(q, U) = (q^\star, U^\star)$ is the unique solution of \eqref{eq:DDRCl}.~$\hfill\Diamond$
\end{proposition}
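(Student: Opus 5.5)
The proof has two parts. First we show that Nature's best response is obtained from Proposition~\ref{prop:linear_moment_objective_gelbrich_constraint}. Then we show that this best response to the primal optimizer $(q^\star,U^\star)$ is the unique solution of \eqref{eq:DDRCl}.

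\emph{First part.} Fix $(q,U)\in\mathcal{U}$ and parametrize $\mathcal{M}$ by $(\mu,\Sigma)$ with $M=\Sigma+\mu\mu^\top$. Then
\[
J(q,U,\mu,M)=\Tr\bigl[K^\top(U)K(U)(\Sigma+\mu\mu^\top)\bigr]+2q^\top L^\top K(U)\mu+q^\top L^\top Lq .
\]
The last term is constant, so maximizing $J$ over $\mathcal{M}$ is exactly problem \eqref{eq:linear_moment_objective_gelbrich_constraint} with $\Psi=K^\top(U)K(U)\in\mathbb{S}_+^{N_\xi}$ and $\psi=K^\top(U)Lq$. To apply Proposition~\ref{prop:linear_moment_objective_gelbrich_constraint} we need $\Psi\neq0$, that is, $K(U)\neq0$. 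Suppose $K(U)=0$. Then $R^{1/2}UF=0$, and since $R\succ0$ this gives $UF=0$. The second block then reduces to $Q^{1/2}D=0$, which contradicts Assumption~\ref{assm:process_noise_cost}. Hence $\Psi\neq0$.

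The remaining hypotheses are $\rho>0$ and $\hat\Sigma\succ0$. We take $\hat\Sigma\succ0$ from Assumption~\ref{assm:P_hat_positive_covariance}, which is implicitly needed in the first part for the maximizer to be well defined.

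Proposition~\ref{prop:linear_moment_objective_gelbrich_constraint} then gives a unique maximizer $(\mu^\star,\Sigma^\star)$ over $\mathcal{M}$ with $\Sigma^\star\succeq\lambda_{\min}(\hat\Sigma)I$. Therefore $(\mu^\star,M^\star)\in\mathcal{M}^+$. Because $\mathcal{M}^+\subseteq\mathcal{M}$, this point also maximizes $J$ over $\mathcal{M}^+$, and it is the unique maximizer there. Uniqueness transfers through the change of variables because the map $(\mu,\Sigma)\mapsto(\mu,\Sigma+\mu\mu^\top)$ is a bijection.

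\emph{Second part.} We use a saddle-point argument. Let $(q^\star,U^\star)$ solve \eqref{eq:DRCu}, and let $(\bar\mu,\bar M)$ be any solution of \eqref{eq:DDRCl}; such a solution exists by Proposition~\ref{prop:lb}.

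By Theorem~\ref{thm:equality}, the chain \eqref{eq:bounds} collapses, so $\max\eqref{eq:DDRCl}=\min\eqref{eq:DRCu}=:v$. The standard minimax inequalities give
\[
v=\min_{\mathcal{U}}J(\cdot,\bar\mu,\bar M)\le J(q^\star,U^\star,\bar\mu,\bar M)\le \max_{\mathcal{M}^+}J(q^\star,U^\star,\cdot)=v .
\]
So every inequality is an equality. In particular, $(\bar\mu,\bar M)$ attains $\max_{\mathcal{M}^+}J(q^\star,U^\star,\cdot)$, which means it is Nature's best response to $(q^\star,U^\star)$.

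By the first part, that best response is unique and equals $(\mu^\star,M^\star)$. Hence every solution of \eqref{eq:DDRCl} coincides with $(\mu^\star,M^\star)$. Since a solution exists, $(\mu^\star,M^\star)$ is the unique solution of \eqref{eq:DDRCl}.

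\emph{Main obstacle.} The delicate point is avoiding circularity. We rely on Theorem~\ref{thm:equality} only for the equality $\max\eqref{eq:DDRCl}=\min\eqref{eq:DRCu}$, and on Propositions~\ref{prop:ub} and~\ref{prop:lb} only for attainment. If the theorem's proof instead used the present proposition, we would establish the equality of values directly. The route would be Sion's minimax theorem applied to $J$: $J$ is convex in $(q,U)$ because it equals the expected quadratic cost, and it is linear in $(\mu,M)$ when written as $\Tr[K^\top K M]+2q^\top L^\top K\mu+\text{const}$. Sion also requires $\mathcal{M}^+$ to be convex and compact, which follows from the convexity of the Gelbrich ball in $(\mu,M)$ together with boundedness. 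Verifying that convexity of $\mathcal{M}^+$ in $(\mu,M)$ is the step I expect to require the most care.
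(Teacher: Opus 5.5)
Your proposal is correct and follows the paper's proof. The first part reduces Nature's best response to Proposition~\ref{prop:linear_moment_objective_gelbrich_constraint} using the same $\Psi\neq 0$ argument as in Proposition~\ref{prop:ub}, and, as you rightly note, it implicitly uses $\hat\Sigma\succ 0$. The second part is the saddle-point argument combining Theorem~\ref{thm:equality}(i), attainment in \eqref{eq:DDRCl}, and uniqueness of the best response; the paper only states this tersely, by analogy with Proposition~\ref{prop:optimal_controller_given_mean_covariance}. Your circularity concern does not arise, since the paper proves Theorem~\ref{thm:equality}(i) directly via Sion's theorem.
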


By using \cite[Proposition 2.2]{kuhn2024distributionallyrobustoptimization}, the dual problem \eqref{eq:DDRCl} can be reformulated as a monolithic tractable semidefinite program (SDP). Tractability is inherited by \eqref{eq:DRCu}, \eqref{eq:DRC} and~\eqref{eq:DDRC} thanks to Proposition~\ref{prop:optimal_controller_given_mean_covariance} and Theorem~\ref{thm:equality}. However, the dimensions of the decision variables and constraints in this SDP scale as $\mathcal{O}(n T)$. Therefore, even though it can be solved in polynomial time, runtimes grow quickly with $n$ and $T$.

% approach is not computationally efficient for large system dimensions $n$ or long time horizons $T$.

A more efficient method for solving \eqref{eq:DDRCl}  is the classical Frank--Wolfe algorithm \cite{frankwolfe1956}. To describe this method, we define $V(\mu, M)$ as the optimal value function of the inner minimization problem in~\eqref{eq:DDRCl}. Using this notation, \eqref{eq:DDRCl} can be represented more compactly as $\max_{(\mu, M) \in \mathcal{M}^+} V(\mu, M)$. Note that $V(\mu, M)$ is concave and smooth because $J(q,U,\mu,M)$ is affine in~$(\mu,M)$ and strictly convex quadratic in $(q,U)$, respectively. 
%$V(\mu, M) := \min_{(q,U) \in \mathcal{U}} J(q, U, \mu, M) = J(q^*(\mu,M),U^*(\mu,M), \mu, M)$ where $(q^*(\mu,M),U^*(\mu,M))$ is the best response to $(\mu, M)$ as in \eqref{eq:primal_best_response}. 
At each iteration $t \in \mathbb{N}$, the Frank--Wolfe algorithm solves the following direction-finding subproblem, which maximizes the first-order Taylor expansion of $V$ around the current iterate $(\mu_t, M_t) \in~\hspace{-0.3em}\mathcal{M}^+$:
% To this end, we define $V : \mathcal{M}^+ \to \mathbb{R}_+$, $V(\mu, M) := \min_{(q,U) \in \mathcal{U}} J(q, U, \mu, M) = J(q^*(\mu,M),U^*(\mu,M), \mu, M)$ where $(q^*(\mu,M),U^*(\mu,M))$ is the best response to $(\mu, M)$ as in \eqref{eq:primal_best_response}. Note that \eqref{eq:DDRCl} can be written as $\max_{(\mu, \Sigma) \in \mathcal{M}^+} V(\mu, M)$.
% At iteration $t \in \mathbb{N}_{+}$, given $(\mu_t, M_t) \in \mathcal{M}^+$, one maximizes the first-order Taylor expansion of the objective function $V$ around the current iterate $(\mu_t, M_t) \in \mathcal{M}^+$ solves the linearized subproblem
\begin{equation}
    \label{eq:fw_lin_subproblem}
    % \max_{(\mu, \Sigma) \in \mathcal{M}^+} \langle\nabla_{(\mu,M)}f(\mu, M),(\mu, M)\rangle =
    \max_{(\mu, M) \in \mathcal{M}^+}  \nabla_\mu V(\mu_t,M_t)^\top \mu + 
    \mathrm{Tr}(\nabla_M V(\mu_t, M_t) M).
\end{equation}
The next iterate is then constructed as
\[
    (\mu_{t+1}, M_{t+1}) = (\mu_t, M_t) + \alpha_t((\mu_{\mathrm{FW}}, M_{\mathrm{FW}}) - (\mu_t, M_t)),
\]
where $(\mu_{\mathrm{FW}}, M_{\mathrm{FW}})$ denotes the solution of \eqref{eq:fw_lin_subproblem}, and $\alpha_t>0$ stands for the step size. The vanilla Frank--Wolfe algorithm sets $\alpha_t = \frac{2}{t+2}$, while the fully adaptive Frank--Wolfe algorithm determines $\alpha_t$ via line search; see \cite[Algorithm 1]{nguyen2021mean}.
% Since $V$ is concave and smooth while $\mathcal{M}^+$ is convex and compact, one can show that $V$ is Lipschitz smooth on $\mathcal{M}^+$, and thus the vanilla Frank--Wolfe algorithm converges to the solution $(\mu^\star, M^\star)$ of \eqref{eq:DDRCl} at a sublinear rate \cite[Theorem 1]{jaggi2013fw}. If $\hat{\mu} = 0$, one can further show that the fully adaptive Frank--Wolfe algorithm converges to $(\mu^\star, M^\star)$ at a linear rate by adapting the arguments of \cite[Theorem 6.2]{nguyen2021mean}.
Since $V$ is concave and smooth while $\mathcal{M}^+$ is convex and compact, one can show that $V$ is Lipschitz smooth on $\mathcal{M}^+$, and thus the vanilla Frank--Wolfe algorithm achieves sublinear convergence of $V(\mu_t,M_t)$ to the optimal value of~\eqref{eq:DDRCl} \cite[Theorem~1]{jaggi2013fw}. %Since the maximizer is unique, $(\mu_t,M_t)\to(\mu^\star,M^\star)$.
If $\hat{\mu}=0$, the fully adaptive Frank--Wolfe algorithm can be shown to achieve linear convergence in objective value by adapting~\cite[Theorem~6.2]{nguyen2021mean}.
Accordingly, we set $\hat{\mu} = 0$ in all numerical experiments of Section \ref{sec:num}. Empirically, however, we observed that the fully adaptive algorithm converges even when $\hat{\mu}\neq 0$.

Computing the gradients $\nabla_{\mu} V(\mu,M)$ and $\nabla_{M} V(\mu,M)$ in the objective function of~\eqref{eq:fw_lin_subproblem} directly
is difficult because this would require differentiating through the matrix-valued maps $q^\star(\mu, M)$ and $U^\star(\mu, M)$ in \eqref{eq:primal_best_response}.
In the context of simpler estimation problems, a direct (though cumbersome) computation of $\nabla V$ is possible \cite{nguyen2021mean}, whereas \cite{taşkesen2023distributionallyrobustlinearquadratic} relies on automatic differentiation. In contrast, we apply Danskin's theorem to compute $\nabla V$ efficiently in the DRLQ-C setting.

\begin{proposition}
\label{prop:gradient_computation}
    Fix $(\mu, M) \in \mathcal{M}^+$, and let $(q^\star, U^\star)$ be the unique solution to $\min_{(q,U) \in \mathcal{U}} J(q, U, \mu, M)$. Then $V$ is differentiable at $(\mu,M)$, and its gradients are given by
    \vspace{-0.3em}
    \begin{align*}
    &\nabla_{\mu} V(\mu, M) = \nabla_\mu J(q^\star, U^\star, \mu, M) = 2K^\top(U^\star) L q^\star,\\
    &\nabla_{M} V(\mu, M) = \nabla_M J(q^\star, U^\star, \mu, M) = K^\top(U^\star)K(U^\star).~\hfill\Diamond
    \end{align*}
\end{proposition}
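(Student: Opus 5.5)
This is a direct application of Danskin's theorem to $V(\mu,M)=\min_{(q,U)\in\mathcal U} J(q,U,\mu,M)$. Here $J$ is affine in $(\mu,M)$ for fixed $(q,U)$, and by Proposition~\ref{prop:optimal_controller_given_mean_covariance} the inner minimizer $(q^\star,U^\star)$ is unique whenever $(\mu,M)\in\mathcal M^+$. The variant of Danskin's theorem I would invoke says the following. Let $g(z)=\min_{x\in X}f(x,z)$, where $f(\cdot,z)$ ranges over a compact set $X$, $f$ is continuous, and $\nabla_z f$ exists and is continuous. If the minimizer $x^\star(z)$ is unique, then $g$ is differentiable at $z$ with $\nabla g(z)=\nabla_z f(x^\star(z),z)$.

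The only obstacle is that $\mathcal U$ is an unbounded linear subspace, so I must first confine the minimization to a compact set uniformly over a neighbourhood of $(\mu,M)$.
\begin{enumerate}
\item Complete the square in $q$. The matrix $L^\top L=R+H^\top QH\succ0$ because $R\succ0$. For any $(\mu',M')$ in the region $\{M'-\mu'\mu'^\top\succeq \tfrac12\lambda_{\min}(\hat\Sigma)I\}$ we then have
\[
J=\|K(U)\Sigma'^{1/2}\|_F^2+\|Lq+K(U)\mu'\|_2^2,\qquad \Sigma'=M'-\mu'\mu'^\top .
\]
\item Bound the first term from below. We have $\|K(U)\Sigma'^{1/2}\|_F^2\ge c\|R^{1/2}UF\|_F^2$ for some $c>0$. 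Since $F$ has full row rank (the purified outputs contain $v_t$ directly, as $F=CD+E$ with $E$ selecting $v$), this term is coercive in $U$.
\item The second term is then coercive in $q$, uniformly for $U$ in bounded sets.
\item Evaluating at $(q,U)=(0,0)$ gives $V(\mu',M')\le \operatorname{Tr}(D^\top QD M')$, which is bounded near $(\mu,M)$.
\item Combining these, every minimizer for $(\mu',M')$ near $(\mu,M)$ lies in a fixed compact subset $X\subset\mathcal U$.
\end{enumerate}
Restricting the minimum to $X$ therefore does not change $V$ locally. Continuity of $J$ and of its partial derivatives in $(\mu,M)$ is immediate, since $J$ is polynomial.

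Applying Danskin's theorem then gives differentiability at $(\mu,M)$ with
\[
\nabla_\mu V=\nabla_\mu J(q^\star,U^\star,\mu,M),\qquad \nabla_M V=\nabla_M J(q^\star,U^\star,\mu,M).
\]
Reading these off the formula for $J$ is routine:
\[
\nabla_\mu\bigl(2q^\top L^\top K(U)\mu\bigr)=2K^\top(U)Lq,\qquad \nabla_M \operatorname{Tr}\bigl(K^\top(U)K(U)M\bigr)=K^\top(U)K(U).
\]
The second identity uses the symmetric-matrix convention, which is valid because $K^\top K$ is symmetric.

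An alternative route for the differentiability claim avoids compactness altogether. By Proposition~\ref{prop:optimal_controller_given_mean_covariance}, $(q^\star,U^\star)$ is given explicitly by a linear system whose coefficient matrix $S^\top[F\Sigma F^\top\otimes(R+H^\top QH)]S$ is positive definite and depends smoothly on $(\mu,M)$. Hence $(q^\star,U^\star)$ is a smooth function of $(\mu,M)$. The chain rule combined with the first-order optimality conditions $\nabla_{(q,U)}J=0$ on $\mathcal U$, which is an envelope argument, yields the same formulas. I would present whichever is cleaner, likely the envelope argument, with Danskin's theorem cited as the conceptual reason.
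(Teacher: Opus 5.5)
Your proposal is correct and follows essentially the paper's route. The paper applies Danskin's theorem to $V=\min_{\mathcal U}J$, made valid by uniform coercivity (inf-compactness) of $J(\cdot,\cdot,\mu',M')$ near $(\mu,M)$, and uses uniqueness of the minimizer from Proposition~\ref{prop:optimal_controller_given_mean_covariance} to turn the directional derivative into a gradient. You differ only in how the compactness is obtained: you confine all minimizers to a fixed compact set for $(\mu',M')$ near $(\mu,M)$ with $M'-\mu'\mu'^\top\succeq\tfrac12\lambda_{\min}(\hat\Sigma)I$, whereas the paper uses a Young-inequality bound and the inf-compact Danskin theorem of Bonnans--Shapiro. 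Your version handles boundary points of $\mathcal M^+$ slightly more carefully. Your envelope-theorem alternative is also valid, provided you note that the system \eqref{eq:theta} stays positive definite for every $\Sigma'\succ0$, not just on $\mathcal M^+$.
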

\vspace{0.5em}
Using $(q^*(\mu,M),U^*(\mu,M))$ to denote the best response to $(\mu, M)$ defined in \eqref{eq:primal_best_response} and recalling the definition of the value function~$V(\mu,M)$, one readily verifies that the direction-finding subproblem~\eqref{eq:fw_lin_subproblem} has the same unique maximizer as
\begin{equation}
    \label{eq:fw_best_response_subproblem}
    \max_{(\mu, M) \in \mathcal{M}^+}  J(q^\star(\mu_t, M_t), U^\star(\mu_t, M_t), \mu, M).
\end{equation}
Problem \eqref{eq:fw_best_response_subproblem} ostensibly computes Nature's best response to $(q^*(\mu_t,M_t),U^*(\mu_t,M_t))$, which is itself the decision maker’s best response to $(\mu_t,M_t)$. Consequently, the Frank–Wolfe algorithm can be interpreted as an iterated best-response algorithm between the decision maker and Nature, with damped updates for Nature; see Algorithm~\ref{alg:play_fw_game}. The best responses are computed efficiently as described in Propositions~\ref{prop:optimal_controller_given_mean_covariance} and~\ref{prop:mean_covariance_given_optimal_controller}. A similar best-response algorithm is studied in~\cite{shoebi2026}. We emphasize that DRLQ-C particularly benefits from gradient computation via Proposition~\ref{prop:gradient_computation} over the automatic differentiation approach in \cite{taşkesen2023distributionallyrobustlinearquadratic} (see Section \ref{sec:num}), presumably due to the size of $\nabla V$ scaling as $\mathcal{O}(n^2T^2)$.
%\vspace{-1em}
\begin{algorithm}
\caption{Iterated Best-Response Algorithm}
\label{alg:play_fw_game}
\begin{algorithmic}%[1]
\Require %$(\mu_0, M_0) \in \mathcal{M}^+$, 
$\mu_0 \gets \hat{\mu}, \ M_0 \gets \hat{\Sigma} + \hat{\mu}\hat{\mu}^\top$
\For{$t \in \mathbb{N}$ until convergence}
    \State $(q_{t}, U_{t}) \gets \arg\min_{(q,U) \in \mathcal{U}} J(q,U,\mu_t, M_t)$ %\hfill primal best response
    \State $(\mu_\textrm{FW}, M_{\textrm{FW}}) \gets \arg\max_{(\mu, M) \in \mathcal{M}^{+}} J(q_{t}, U_{t}, \mu, M)$ %\hfill dual best response
    \State select a step size $\alpha_t\in[0,1]$  %\hfill set step size
    \State $(\mu_{t+1}, M_{t+1}) \gets (\mu_t, M_t) + \alpha_t ((\mu_\textrm{FW},M_\textrm{FW}) - (\mu_t, M_t))$
\EndFor
\Ensure $(q_t, U_t)$ solves \eqref{eq:DRCu} and $(\mu_t, M_t)$ solves \eqref{eq:DDRCl}
\end{algorithmic}
\end{algorithm}
\vspace{-1em}

\section{Numerical Experiments}
\label{sec:num}

We perform numerical experiments to analyze the DRLQ-C model and its numerical scalability using Algorithm~\ref{alg:play_fw_game}. All experiments were run on a machine with an Apple~M3 chip and 36 GB of RAM. The code is available at \href{https://github.com/DecodEPFL/drlq-c}{\texttt{github.com/DecodEPFL/drlq-c}}.

\subsection{Numerical Scalability}

The first experiment compares the runtime of Algorithm~\ref{alg:play_fw_game} against the time needed by MOSEK to solve the monolithic SDP reformulation of~\eqref{eq:DDRCl} discussed in Section~\ref{sec:best_response}. We consider a family of time-invariant systems with $n = r = d$ and $m = p = \max\{1,\lfloor{d/5}\rfloor\}$, where $d \in \mathbb{N}$ is a tunable parameter. We set $T = 5$, $Q = I_{N_x}$, $R = I_{N_u}$, $D_t = I_r$, and $A_t = \bar{A}/(2\rho(\bar{A}))$, where $\bar{A}$ is a random matrix with independent standard normal entries, and~$\rho(\bar{A})$ denotes its spectral radius. We also set $B_t = U_BV_B^\top$, where $U_B\Lambda_B V_B^\top$ is the singular value decomposition of another random matrix $\bar{B}$ with independent standard normal entries. The matrix~$C_t$ is constructed analogously. We set $\rho = N_\xi^{1/2}$, $\hat{\mu} = 0$ and $\hat{\Sigma} = \mathrm{diag}(\hat{\Sigma}_{x_0}, I_T \otimes \hat{\Sigma}_{wv})$, where $\hat{\Sigma}_{x_0}$ and $\hat{\Sigma}_{wv}$ are generated as follows. For $\alpha \in \{x_0, wv\}$, let $M_\alpha \in \mathbb{R}^{d_\alpha\times d_\alpha}$ with $d_{x_0}=n$ and $d_{wv}=r+p$ be a random matrix with independent standard normal entries, let $P_\alpha$ be the orthogonal matrix of eigenvectors of $(M_\alpha + M_\alpha^\top)/2$, and define $\hat{\Sigma}_{\alpha} = P_\alpha \Lambda_\alpha P_\alpha^\top$, where $\Lambda_\alpha$ is a diagonal matrix with independent diagonal entries drawn uniformly from $[1,2]$. The large size of the resulting instance of~\eqref{eq:DRC} is evident from the matrices in Appendix~\ref{appdx:matrices}, whose dimensions scale as $\mathcal O(dT)$. Figure~\ref{fig:runtime} reports the runtimes of MOSEK as well as the vanilla and the fully adaptive Frank–Wolfe algorithms as a function of~$d$, averaged over 3 independent runs. The gradients in the objective of~\eqref{eq:fw_lin_subproblem} are either computed using Proposition~\ref{prop:gradient_computation} or via automatic differentiation. All algorithms are terminated once the relative duality gap drops below~$10^{-4}$. We observe that the fully adaptive Frank-Wolfe algorithm that computes gradients using Proposition~\ref{prop:gradient_computation} is the fastest. All Frank–Wolfe methods outperform MOSEK, which runs out of memory for~$d>15$. All fully adaptive Frank-Wolfe algorithms are about twenty times faster than their vanilla counterparts, and computing gradients using Proposition~\ref{prop:gradient_computation} is generally twice as fast as automatic differentiation.

\begin{figure}
    \centering
    \includegraphics[width=\linewidth]{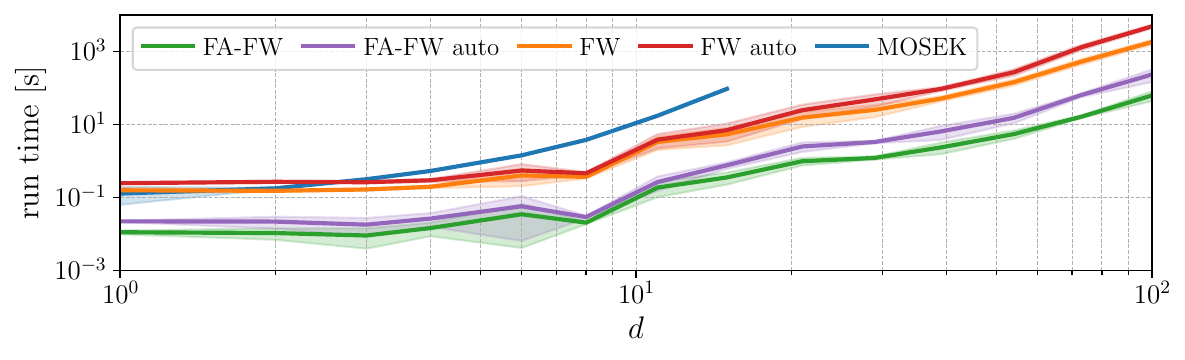}
    \vspace{-2.5em}
    \caption{Runtime as a function of the tuning parameter~$d$ for the fully adaptive Frank--Wolfe algorithm with gradient computation using Proposition~\ref{prop:gradient_computation} (FA--FW) and automatic differentiation (FA--FW auto), the vanilla Frank-Wolfe algorithm with gradient computation using Proposition~\ref{prop:gradient_computation} (FW) and automatic differentiation (FW auto), and the SDP solved by MOSEK.}
    \label{fig:runtime}
    \vspace{-0.5em}
\end{figure}

\begin{figure}
    \centering
    \includegraphics[width=\linewidth]{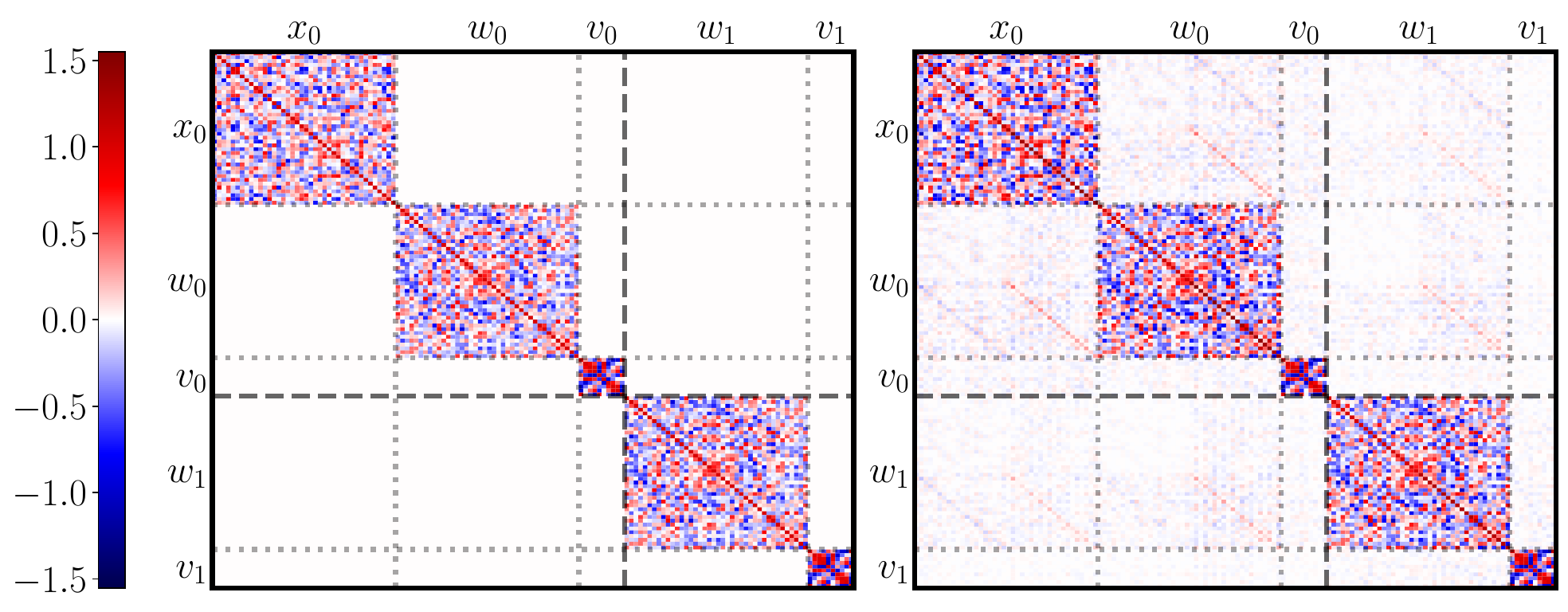}
    \vspace{-1.5em}
    \caption{Nominal covariance matrix $\hat{\Sigma}$ (left) %DRLQ worst-case (middle)
    and worst-case DRLQ-C covariance matrix (right) of $\xi = (
        x_0, w_0, v_0 ,w_1,v_1)$. Dashed and dotted lines separate the correlation blocks associated with different time steps and associated with the initial state, process and measurement noise, respectively.}
    \vspace{-1.5em}
    \label{fig:sigmas}
\end{figure}

\begin{figure}
    \vspace{-1.5em}
    \centering
    \includegraphics[width=\linewidth]{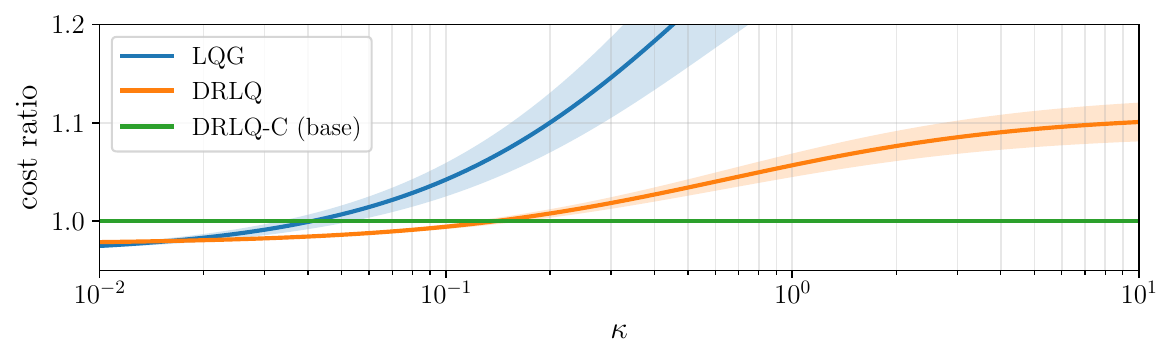}
    \vspace{-2.2em}
    \caption{Normalized costs of the LQG, DRLQ, and DRLQ-C optimal controllers under $\Sigma_{\mathrm{OOS}}(\kappa)$ relative to the DRLQ-C cost for each $\kappa$.}
    \vspace{-1em}
    \label{fig:cost_ratio}
\end{figure}

\subsection{Out-of-Sample Robustness to Correlated Noise}

We compare the out-of-sample performance of the DRLQ-C controller with that of (i) the classical LQG controller and (ii) the DRLQ controller of \cite{taşkesen2023distributionallyrobustlinearquadratic}. As a benchmark, we study the swing dynamics of a networked power system with~$N$ buses and state \(x_t=(\theta_t,\omega_t) \in \mathbb{R}^{2N}\), where $\theta_t$ and $\omega_t$ denote the voltage angles and frequency deviations \cite{bergen2007structure}. The dynamics are discretized with time step $\Delta > 0$ and given by

\vspace{-0.5em}
\small 
\[
A_t\!=\!
\begin{bmatrix}
I_N & \Delta I_N\\
-\Delta M^{-1}L & I_N-\Delta M^{-1}\Gamma
\end{bmatrix},\,
B_t\!=\!
\begin{bmatrix}
0\\
\Delta M^{-1}
\end{bmatrix}, \,
D_t\!=\!
I_{2N},
\]
\normalsize
where $M = \mathrm{diag}(m_1,\dots, m_N) \in \mathbb{S}_{++}^N$ is the inertia matrix, $L$ the graph Laplacian and $\Gamma = \mathrm{diag}(\gamma_1,\dots,\gamma_N) \in \mathbb{S}_{++}^N$ the damping matrix. We use a ring network, that is, a cycle graph with $N=20$ nodes, and we set $T = 2$, $\Delta = 0.5$, $Q = I_{T+1} \otimes \mathrm{diag}(0.1L, 5I_N)$, $R = I_T \otimes 0.1I_N$, $\rho = 1.77$ and $\hat{\mu} = 0$. The system is fully actuated in frequency, while $C_t$ is chosen such that the output consists of noisy frequency measurements at every second bus. To generate a single benchmark instance, we draw $m_i$ and $\gamma_i$ uniformly from $[3,5]$ and $[0.8,1.2]$, respectively, and we set $\hat{\Sigma} = \mathrm{diag}(\hat{\Sigma}_{x_0}, \hat{\Sigma}_{w}, \hat{\Sigma}_{v}, \hat{\Sigma}_{w}, \hat{\Sigma}_{v})$, where $\hat{\Sigma}_{x_0}$, $\hat{\Sigma}_{w}$ and $\hat{\Sigma}_{v}$ are generated as follows. For each $\beta\in\{x_0,w,v\}$, we draw $U_\beta \in \mathbb{R}^{n_\beta \times r_\beta}$ with independent standard normal entries, where $n_{x_0} = n_w = 40$, $n_v = 10$, $r_{x_0} = r_{w} = 8$  and $r_v = 2$. We then set $S_\beta=U_\beta U_\beta^\top+10^{-2}I_{n_\beta}$ and define $\hat\Sigma_\beta$ through $(\hat{\Sigma}_\beta)_{ij} = (S_\beta)_{ij} /\sqrt{(S_\beta)_{ii}(S_\beta)_{jj}}$. Figure~\ref{fig:sigmas} compares~$\hat{\Sigma}$ against the covariance matrix solving~\eqref{eq:DDRCl}, that is, Nature's equilibrium strategy. We observe that Nature introduces relatively small but noticeable temporal correlations relative to the block-diagonal entries. The corresponding optimal mean is $\mu^\star= \hat{\mu} = 0$.

In DRLQ, Nature cannot introduce temporal correlations and thus selects the covariance matrices of $x_0$, $w_0$, $v_0$, $w_1$ and $v_1$ independently; see \cite{taşkesen2023distributionallyrobustlinearquadratic}. We set $\rho_{x_0} = \rho_{w} = 1$ as the radii for $x_0, w_0, w_1$ and $\rho_v = 0.25$ for $v_0, v_1$. We then compute the cost for the optimal LQG, DRLQ and DRLQ-C controllers under zero-mean disturbances with out-of-sample covariance $\Sigma_{\mathrm{OOS}}(\kappa) = \hat{\Sigma} + \kappa \Sigma_{\mathrm{corr}}$, where $\Sigma_\mathrm{corr}$ is defined through $(\Sigma_{\mathrm{corr}})_{ij}
= (S)_{ij} /\sqrt{(S)_{ii}(S)_{jj}}$ with $S=U U^\top+10^{-2}I_{N_\xi}$ and $U \in \mathbb{R}^{N_\xi \times 14}$ drawn with independent standard normal entries, and $\kappa \geq 0$ represents the correlation intensity. Figure~\ref{fig:cost_ratio} shows the out-of-sample costs with respect to $\Sigma_{\mathrm{OOS}}(\kappa)$ for $\kappa \in [10^{-2},10^1]$, averaged over 10 independent draws of $\Sigma_{\mathrm{corr}}$. We observe that DRLQ-C outperforms both other methods for $\kappa > 0.15$, while only being mildly conservative for $\kappa \leq 0.15$.

%%%%%%%%%%%%%%%%%%%%%%%%%%%%%%%%%%%%%%%%%%%%%%%%%%%%%%%%%%%%%%%%%%%%%%%%%%%%%%%%
\section{Conclusion}

We established the optimality of affine policies for distributionally robust linear-quadratic control problems affected by ambiguous and temporally correlated noise. We also showed that the worst-case noise distribution is an affine push-forward of the nominal distribution at the center of the ambiguity set. Both the optimal controller and the worst-case distribution can be computed efficiently, and the optimal controller achieves low out-of-sample cost both under correlated and uncorrelated noise. Future work will investigate ambiguity sets imposing sparse correlation patterns.

%%%%%%%%%%%%%%%%%%%%%%%%%%%%%%%%%%%%%%%%%%%%%%%%%%%%%%%%%%%%%%%%%%%%%%%%%%%%%%%%
% \section{Acknowledgements}

% The authors gratefully acknowledge the contribution of National Research Organization and reviewers' comments.

% %%%%%%%%%%%%%%%%%%%%%%%%%%%%%%%%%%%%%%%%%%%%%%%%%%%%%%%%%%%%%%%%%%%%%%%%%%%%%%%%

% References are important to the reader; therefore, each citation must be complete and correct. If at all possible, references should be commonly available publications.

% \begin{thebibliography}{99}

% \bibitem{c1}
% J.G.F. Francis, The QR Transformation I, {\it Comput. J.}, vol. 4, 1961, pp 265-271.

% \bibitem{c2}
% H. Kwakernaak and R. Sivan, {\it Modern Signals and Systems}, Prentice Hall, Englewood Cliffs, NJ; 1991.

% \bibitem{c3}
% D. Boley and R. Maier, "A Parallel QR Algorithm for the Non-Symmetric Eigenvalue Algorithm", {\it in Third SIAM Conference on Applied Linear Algebra}, Madison, WI, 1988, pp. A20.

% \end{thebibliography}

\bibliographystyle{ieeetr} % Numerisk stil
\bibliography{references} % Din .bib-fil

\appendices
\section{Block Matrices}
\label{appdx:matrices}
The block matrices $H \in \mathbb{R}^{N_x \times N_u}$, $C \in \mathbb{R}^{N_y \times N_{x}}$, $D \in \mathbb{R}^{N_x \times N_{\xi}}$, $E \in \mathbb{R}^{N_y \times N_{\xi}}$ and $F \in \mathbb{R}^{N_y \times N_\xi}$ are defined as
% \footnotesize
% $$
% \begin{aligned}
% & H:=\begin{bmatrix}
% 0 & \\
% \mathcal{A}_1^1 B_0 & 0 & & \\
% \mathcal{A}_1^2 B_0 & \mathcal{A}_2^2 B_1 & 0 & & \\
% \vdots & &  & \ddots & \\
% \vdots & & & & 0\\
% \mathcal{A}_1^T B_0 & \mathcal{A}_2^T B_1 & \cdots & \cdots & \mathcal{A}_T^T B_{T-1}
% \end{bmatrix},\\ &D:=\begin{bmatrix}
% \mathcal{A}_0^0 & \\
% \mathcal{A}_0^1 & \mathcal{A}_1^1 D_0 \\
% \mathcal{A}_0^2 & \mathcal{A}_1^2 D_0 & \mathcal{A}_2^2 D_0 & & \\
% \vdots & & & \ddots & \\
% \mathcal{A}_0^T & \mathcal{A}_1^T D_0 & \mathcal{A}_2^T D_0 & \cdots & \mathcal{A}_T^T D_0
% \end{bmatrix} \\
% & C:=\begin{bmatrix}
% C_0 & 0 & & \\
% & \ddots & \ddots & \\
% & & C_{T-1} & 0
% \end{bmatrix}, \quad E:=\begin{bmatrix}
% 0 & E_0 & & \\
% & 0 & \ddots & \\
% &   & \ddots & \ddots \\
% & & & 0& E_0
% \end{bmatrix}
% \end{aligned}
% $$
% \normalsize
\vspace{-1em}

\footnotesize
\setlength{\arraycolsep}{3pt}
\[
\begin{aligned}
H &:=
\begin{bmatrix}
0 & \cdots & 0 \\
\mathcal{A}_1^1 B_0 &  \\
\vdots  & \ddots & \\
\mathcal{A}_1^T B_0  & \cdots & \mathcal{A}_T^T B_{T-1}
\end{bmatrix},\ C :=
\begin{bmatrix}
C_0 & & & 0\\
& \ddots &  & \vdots \\
& & C_{T-1} & 0
\end{bmatrix},
\end{aligned}
\]
\[
\begin{aligned}
D &:=
\begin{bmatrix}
\mathcal{A}_0^0 \\
\mathcal{A}_0^1 & \mathcal{A}_1^1 \mathcal{D}_0 \\
\vdots & \vdots & \ddots &  \\
\mathcal{A}_0^T & \mathcal{A}_1^T \mathcal{D}_{0}  & \cdots & \mathcal{A}_T^T \mathcal{D}_{T-1}
\end{bmatrix}, \ E :=
\begin{bmatrix}
0 & E_0 \\
\vdots &  & \ddots \\
0 & &  & E_0
\end{bmatrix},\\
F &:=
\begin{bmatrix}
C_0\mathcal{A}_0^0 & E_0 \\
C_1\mathcal{A}_0^1 & C_1\mathcal{A}_1^1 \mathcal{D}_{0} & E_0 \\
\vdots & \vdots & \ddots &\\
C_{T-1}\mathcal{A}_0^{T-1} & C_{T-1}\mathcal{A}_1^{T-1}\mathcal{D}_0
& \cdots 
& C_{T-1}\mathcal{A}_{T-1}^{T-1} \mathcal{D}_{T-2}
& E_0
\end{bmatrix},
\end{aligned}
\]
\normalsize
where $\mathcal{A}_s^t:=A_{t-1} A_{t-2} \cdots A_s$ for $s<t$, $\mathcal{A}_t^t:=I_{n}$, $\mathcal{D}_t = \begin{bmatrix}D_{t} &0\end{bmatrix}$ and $E_0 = \begin{bmatrix}0 & I_{p}\end{bmatrix}$.
% Moreover, the block matrix associated with the purified observation process $\eta = F\xi$ is given by
% \footnotesize
% \begin{equation*}
% F =
% \begin{bmatrix}
% C_0\mathcal{A}_0^0 & E_0 \\
% C_1\mathcal{A}_0^1 & C_1\mathcal{A}_1^1 D_0 & E_0 \\
% \vdots & \vdots & \ddots &\\
% C_{T-1}\mathcal{A}_0^{T-1} & C_1\mathcal{A}_1^{T-1}D_0
% & \cdots 
% & C_{T-1}\mathcal{A}_{T-1}^{T-1} D_0
% & E_0
% \end{bmatrix}
% \end{equation*}
% \normalsize
Note that $F$ has full row-rank.

\section{Proofs}
\label{appdx:proofs}

% \subsection{Proof of Proposition \ref{prop:ub}}
% \label{appdx:ub}

\textit{Proof of Proposition \ref{prop:ub}:}
Problem \eqref{eq:pre_DRCu} has the same optimal value as $\inf_{(q,U) \in \mathcal{U}} \sup_{(\mu, M) \in \mathcal{M}} J(q, U, \mu, M)$.
% \begin{equation}
%     \label{eq:proof_11}
%     \begin{aligned}
%       \inf_{(q,U) \in \mathcal{U}} &\sup_{(\mu, M) \in \mathcal{M}} J(q, U, \mu, M).
%     \end{aligned}
% \end{equation}
We show that the inner supremum $\sup_{(\mu, M) \in \mathcal{M}} J(q, U, \mu, M)$ can be restricted to $\mathcal{M}^+$. The statement is true if $\rho = 0$, since then $\mathcal{M} = \{(\hat{\mu}, \hat{M})\}$ with $\hat{M} - \hat{\mu}\hat{\mu}^\top = \hat{\Sigma} \succeq \lambda_{\min}(\hat{\Sigma})I_{N_\xi}$. Assume that $\rho > 0$. The inner supremum is equivalent to \eqref{eq:linear_moment_objective_gelbrich_constraint} in Proposition \ref{prop:linear_moment_objective_gelbrich_constraint} with $\Psi = K^\top(U)K(U)$, $\psi = K^\top(U)Lq$ and the change of variables $(\mu, \Sigma(\mu, M)) = (\mu, M - \mu \mu^\top)$, where $(\mu, M) \in \mathcal{M}$. The assumptions of Proposition \ref{prop:linear_moment_objective_gelbrich_constraint} are satisfied since $\hat{\Sigma} \succ 0$ by Assumption \ref{assm:P_hat_positive_covariance} and $\Psi \neq 0$ for all $(q,U) \in \mathcal{U}$. Indeed, $\Psi = 0$ if and only if $K(U) = 0$, which holds if and only if $R^{\frac{1}{2}}UF = 0$ and $Q^{\frac{1}{2}}(HUF + D) = 0$. Since $R \succ 0$, the latter conditions imply $Q^\frac{1}{2}D = 0$, contradicting Assumption \ref{assm:process_noise_cost}. Hence, Proposition \ref{prop:linear_moment_objective_gelbrich_constraint} applies. In particular, $\sup_{(\mu, M) \in \mathcal{M}} J(q, U, \mu, M)$ is attained, and its feasible set can be restricted to $\mathcal M^+$.
% Hence, \eqref{eq:proof_11} is equivalent to
% \begin{equation}
%     \label{eq:proof_12}
%     \begin{aligned}
%       \inf_{(q,U) \in \mathcal{U}} &\max_{(\mu, M) \in \mathcal{M}^+} J(q, U, \mu, M).
%     \end{aligned}
% \end{equation}
Since $J(\cdot,\cdot,\mu, M)$ is proper and continuous for every $(\mu, M) \in \mathcal{M}$, the point-wise maximum $W(q, U) := \max_{(\mu, M) \in \mathcal{M}^+}J(q,U,\mu,M)$ is proper and lower semicontinuous. To conclude that $\inf_{(q,U) \in \mathcal{U}} W(q,U)$ is attained, it remains to show that $W$ is coercive.
% \begin{align*}&W(q,U) \geq J(q,U,\hat{\mu},\hat{M})\geq \mathbb{E}_{\hat{\mathbb{P}}}[(UF\xi + q)^\top R (UF\xi + q)] \\
% &= \mathrm{Tr}(F^\top U^\top R UF \Sigma) + (UF\hat\mu + q)^\top R (UF\hat\mu + q) \\
% &\geq \lambda_{\min}(R)\left[\lambda_{\min}(\hat{\Sigma}) \lambda_{\min}(FF^\top) \|U\|_\mathrm{F}^2 + \|UF\hat\mu + q\|_2^2\right]\end{align*}
Indeed, $W(q,U) \geq J(q,U,\hat{\mu},\hat{M})\geq \mathbb{E}_{\hat{\mathbb{P}}}[(UF\xi + q)^\top R (UF\xi + q)]
= \mathrm{Tr}(F^\top U^\top R UF \hat\Sigma) + (UF\hat\mu + q)^\top R (UF\hat\mu + q)
\geq \lambda_{\min}(R)\lambda_{\min}(\hat{\Sigma}) \lambda_{\min}(FF^\top) \|U\|_\mathrm{F}^2 + \lambda_{\min}(R)\|UF\hat\mu + q\|_2^2$ where the first inequality follows since $(\hat{\mu}, \hat{M}) \in \mathcal{M}^+$, the second inequality by $J(q,U,\hat{\mu},\hat{M}) = \mathbb{E}_{\hat{\mathbb{P}}}[x^\top Q x + u^\top R u]$ under affine policies and discarding the first integrand, the first equality by expanding and using that $\hat M = \hat\Sigma + \hat\mu \hat\mu^\top$, and the third inequality by repeated application of $\mathrm{Tr}(A^\top B A) \geq \lambda_{\min}(B)\mathrm{Tr}(A^\top A)$ for $B \succeq 0$ and cyclicity of the trace. It follows that $\lambda_{\min}(R), \lambda_{\min}(\hat{\Sigma}), \lambda_{\min}(FF^\top) > 0$, since $R \succ 0$, $\hat\Sigma \succ 0$ by Assumption \ref{assm:P_hat_positive_covariance} and $F$ has full row-rank. Hence, $W(q,U) \to \infty$ if $\|U\|_\mathrm{F} \to \infty$. It remains to show that $W(q,U) \to \infty$ if $U$ is bounded and $\|q\|_2 \to \infty$. This follows, since then, $\|UF\mu + q\|_2 \to \infty$ as $\|q\|_2 \to \infty$.
% Hence, $W$ is coercive.
\hfill $\QED$

\vspace{0.5em}

\textit{Proof of Proposition \ref{prop:lb}:}
We first define $\mathcal{W}_\#^+ := \{\mathbb{P} \in \mathcal{W}_\#:\mathrm{Cov}_\mathbb{P}[\xi] \succeq \lambda_{\min}(\hat{\Sigma})I_{N_\xi}\}$, obtaining a lower bound on \eqref{eq:pre_DDRCl} by restricting the maximization to $\mathcal{W}_\#^+$. Next, we observe that the inner infimum in this lower bound can be restricted to affine policies. Indeed, every $\mathbb{P} \in \mathcal{W}_\#^+$ has the following two properties: (i) $\mathbb{P}$ is an affine push-forward of $\hat{\mathbb{P}}$ and hence elliptically contoured by Assumption \ref{assm:P_hat_elliptical} \cite[Lemma 3.1]{hult2002multivariate} and (ii) the covariance matrix of $\mathbb{P}$ is positive definite. Hence, \cite[Lemma IV.4, Lemma IV.5]{hadjiyiannis2011efficient} apply and the inner infimum in the lower bound is solved by an affine policy, that is,
\begin{equation}
\label{eq:proof_21}
\sup_{\mathbb{P}\in\mathcal{W}_\#^+}\ \min_{(q,U)\in\mathcal U}
\left\{\mathbb{E}_{\mathbb P}[x^\top Qx+u^\top Ru]:
\begin{aligned}&u=UF\xi+q,\\&x=Hu+D\xi\end{aligned}\right\},
\end{equation}
is a lower bound on \eqref{eq:pre_DDRCl}.
% \begin{equation}
%     \label{eq:proof_21}
%     \begin{aligned}
%       \sup_{\mathbb{P} \in \mathcal{W}_\#^+} \quad \inf \quad &\mathbb{E}_{\mathbb{P}}\left[x^\top Qx + u^\top Ru\right]\\
%     \mathrm{s.t.} \quad & (q,U) \in \mathcal{U}, \ u = UF\xi + q, \\ &x = Hu + D\xi.\\
%     \end{aligned}
% \end{equation}
Let $\mathcal{G}^+ : = \{\mathbb{P} \in \mathcal{P}(\mathbb{R}^{N_\xi}): (\mathbb{E}_\mathbb{P}[\xi], \mathbb{E}_\mathbb{P}[\xi\xi^\top]) \in \mathcal{M}^+\}$. Clearly, $\mathcal{W}_\#^+ \subset \mathcal{G}^+$ by the Gelbrich bound \cite[Theorem 2.20]{kuhn2024distributionallyrobustoptimization}.
% It follows that $\mathcal{W}_\#^+ \subseteq \mathcal{G}^+$. Indeed, let $\mathbb{P} \in \mathcal{W}_\#^+$ and denote its mean and covariance by $(\mu, \Sigma)$. Since $\mathbb{P}$ is a positive semidefinite affine push-forward of $\hat{\mathbb{P}}$ and $\hat{\Sigma} \succ 0$, we have by \cite[Theorem 2]{nguyen2021mean} that $\mathbb{G}((\mu,\Sigma), (\hat\mu, \hat\Sigma)) = \mathbb{W}(\mathbb{P}, \hat{\mathbb{P}})$, and hence $\mathbb{P} \in \mathcal{G}^+$.
It remains to show that relaxing the feasible set $\mathcal{W}_\#^+$ in \eqref{eq:proof_21} to $\mathcal{G}^+$ does not change the optimal value. Indeed, let $\mathbb{P} \in \mathcal{G}^+$ with $\mathbb{P} \sim (\mu, \Sigma)$ and define $\mathcal{T}(\xi) = P(\xi - \hat{\mu}) + b$ with $P = \hat{\Sigma}^{-\frac{1}{2}}(\hat{\Sigma}^{\frac{1}{2}}\Sigma\hat{\Sigma}^{\frac{1}{2}})^\frac{1}{2}\hat{\Sigma}^{-\frac{1}{2}}\in \mathbb{S}_{++}^{N_\xi}$ and $b = \mu$.
% , which is possible by Assumption \ref{assm:P_hat_positive_covariance}.
Then the push-forward $\mathbb{Q} := \mathcal{T}_\#\hat{\mathbb{P}}$ satisfies $\mathbb{Q} \sim (\mu, \Sigma)$. Indeed, $\mathbb{E}_{\mathbb{Q}}[\xi] = \mathbb{E}_{\hat{\mathbb{P}}}[\mathcal{T}(\xi)] = \mu$ and
$\mathbb{E}_{\mathbb{Q}}[(\xi - \mu)(\xi - \mu)^\top] = P\hat{\Sigma} P^\top
%=\hat{\Sigma}^{-\frac{1}{2}}(\hat{\Sigma}^{\frac{1}{2}}\Sigma\hat{\Sigma}^{\frac{1}{2}})^\frac{1}{2}\hat{\Sigma}^{-\frac{1}{2}}\hat{\Sigma}\hat{\Sigma}^{-\frac{1}{2}}(\hat{\Sigma}^{\frac{1}{2}}\Sigma\hat{\Sigma}^{\frac{1}{2}})^\frac{1}{2}\hat{\Sigma}^{-\frac{1}{2}}
%=\hat{\Sigma}^{-\frac{1}{2}}(\hat{\Sigma}^{\frac{1}{2}}\Sigma\hat{\Sigma}^{\frac{1}{2}})^\frac{1}{2}(\hat{\Sigma}^{\frac{1}{2}}\Sigma\hat{\Sigma}^{\frac{1}{2}})^\frac{1}{2}\hat{\Sigma}^{-\frac{1}{2}}
%=\hat{\Sigma}^{-\frac{1}{2}}(\hat{\Sigma}^{\frac{1}{2}}\Sigma\hat{\Sigma}^{\frac{1}{2}})\hat{\Sigma}^{-\frac{1}{2}} 
=\Sigma.$
% \begin{align*}&\mathrm{Cov}_\mathbb{Q}[\xi] = \mathbb{E}_{\mathbb{Q}}[(\xi - \mu)(\xi - \mu)^\top] = P\hat{\Sigma} P^\top\\
% &=\hat{\Sigma}^{-\frac{1}{2}}(\hat{\Sigma}^{\frac{1}{2}}\Sigma\hat{\Sigma}^{\frac{1}{2}})^\frac{1}{2}\hat{\Sigma}^{-\frac{1}{2}}\hat{\Sigma}\hat{\Sigma}^{-\frac{1}{2}}(\hat{\Sigma}^{\frac{1}{2}}\Sigma\hat{\Sigma}^{\frac{1}{2}})^\frac{1}{2}\hat{\Sigma}^{-\frac{1}{2}}\\
% &=\hat{\Sigma}^{-\frac{1}{2}}(\hat{\Sigma}^{\frac{1}{2}}\Sigma\hat{\Sigma}^{\frac{1}{2}})^\frac{1}{2}(\hat{\Sigma}^{\frac{1}{2}}\Sigma\hat{\Sigma}^{\frac{1}{2}})^\frac{1}{2}\hat{\Sigma}^{-\frac{1}{2}}\\
% &=\hat{\Sigma}^{-\frac{1}{2}}(\hat{\Sigma}^{\frac{1}{2}}\Sigma\hat{\Sigma}^{\frac{1}{2}})\hat{\Sigma}^{-\frac{1}{2}} =\Sigma.
% \end{align*}
% It follows that $\mathrm{Cov}_\mathbb{Q}[\xi] = \Sigma \succeq \lambda_{\min}(\hat{\Sigma})I$, where the inequality is due to $\mathbb{P} \in \mathcal{G}^+$.
Thus, we have $\mathbb{W}(\mathbb{Q}, \hat{\mathbb{P}}) = \mathbb{G}((\mu, \Sigma), (\hat{\mu}, \hat{\Sigma})) \leq \rho$, where the equality is due to \cite[Theorem 2]{nguyen2021mean} and the inequality follows since $\mathbb{P} \in \mathcal{G}^+$. Hence, $\mathbb{Q} \in \mathcal{W}_\#^+$. The cost function in \eqref{eq:proof_21} depends only on first and second moments of $\mathbb{P}$. Thus, for every $\mathbb{P} \in \mathcal{G}^+$, there exists $\mathbb{Q}\in \mathcal{W}_{\#}^+$ %with the same first and second moments, and therefore 
with the same objective value as $\mathbb{P}$. Problem \eqref{eq:DDRCl} is now obtained from \eqref{eq:proof_21} by replacing $\mathcal{W}_\#^+$ with $\mathcal{G}^+$ and reformulating in terms of $(\mu, M) \in \mathcal{M}^+$. The supremum in \eqref{eq:DDRCl} is attained, as it maximizes an upper semicontinuous function on the compact set $\mathcal{M}^+$. Indeed, $\mathcal{M}$ is compact \cite[Proposition 2.3]{kuhn2024distributionallyrobustoptimization} and
$\mathcal M^+$ is a closed subset of $\mathcal{M}$. \hfill \QED

\vspace{0.5em}

\textit{Proof of Theorem \ref{thm:equality}:} To prove (i), it suffices to show the statement for \eqref{eq:DRCu} and \eqref{eq:DDRCl}. Note that $J(q,U,\mu,M)$ is concave (affine) in $(\mu,M)$. We show that $J$ is convex in $(q,U)$. Indeed, with $M = \Sigma + \mu\mu^\top$, we have $J(q,U,\mu,M) = \mathrm{Tr}(K^\top(U)K(U)\Sigma) + \|K(U)\mu + Lq\|_2^2$, which is convex since $K(U)$ is affine and $\Sigma \succeq 0$. Moreover, $\mathcal U$ is convex and $\mathcal M$ is convex and compact \cite[Proposition 2.3]{kuhn2024distributionallyrobustoptimization}, and therefore so is $\mathcal M^+$.
% , being a closed subset of $\mathcal{M}$ and an intersection of convex sets.
% Hence, $\mathcal M^+$ is convex, being the intersection of two convex sets. Moreover, $\mathcal M^+$ is a closed subset of the compact set $\mathcal M$ and is therefore compact.
Now apply \cite[Theorem 4.2]{sion1958general}.

To prove (ii), denote by $\mathcal{J}(u, \mathbb{P})$ the cost function in \eqref{eq:DRC}. It holds that $u^\star = U^\star\eta + q^\star$ is optimal in \eqref{eq:DRC}. Indeed, $\sup_{\mathbb{P} \in \mathcal{W}}\mathcal J(u^\star, \mathbb{P}) \leq \sup_{\mathbb{P} \in \mathcal{G}}\mathcal J (u^\star, \mathbb{P}) = \sup_{(\mu, M) \in \mathcal{M}^+} J(q^\star,U^\star, \mu, M) = \inf_{u \in \mathcal{N}_\eta} \sup_{\mathbb{P} \in \mathcal{W}} \mathcal{J}(u, \mathbb{P})$. The inequality is due to $\mathcal{W} \subset \mathcal{G}$. The first equality follows by reformulating in terms of $(\mu, M) \in \mathcal{M}$ and noting that $\mathcal{M}$ can be restricted to $\mathcal{M}^+$ by the same argument as in the proof of Proposition \ref{prop:ub}. The second equality is the equality of \eqref{eq:DRC} and \eqref{eq:DRCu} by (i). The affine representation in $y$ is due to \cite{SkafBoyd2010}.

To prove (iii), note that $\mathbb{P}^\star \sim (\mu^\star, \Sigma^\star)$ by the definition of $\mathcal{T}$ and the same computation as in the proof of Proposition \ref{prop:lb}. We show that $\mathbb{P}^\star$ is optimal in \eqref{eq:DDRC}. Indeed, $\inf_{u \in \mathcal{N}_\eta} \mathcal{J}(u, \mathbb{P}^\star) = \inf_{(q, U) \in \mathcal{U}} \mathcal{J}(U F \xi + q, \mathbb{P}^\star) = \inf_{(q, U) \in \mathcal{U}} J(q, U, \mu^\star, M^\star) = \sup_{\mathbb{P} \in \mathcal{W}} \inf_{u \in \mathcal{N}_\eta} \mathcal{J}(u, \mathbb{P})$.
The first equality follows by affine optimality \cite[Lemma IV.4, Lemma IV.5]{hadjiyiannis2011efficient} and the second equality from reformulation in terms of moments. The third equality is the equality of \eqref{eq:DDRC} and \eqref{eq:DDRCl} by (i). \hfill \QED

\vspace{0.5em}

\textit{Proof of Proposition \ref{prop:optimal_controller_given_mean_covariance}:}
Minimizing $J(q, U, \mu, M)$ first over $q$ yields the optimizer $q^\star(U,\mu, M)$ given in the proposition statement. Substituting $q^\star$ back and vectorizing yields the problem $\min_{(\cdot,U) \in \mathcal{U}} \mathrm{vec}(U)^\top [(F\Sigma F^\top) \otimes (R + H^\top Q H)]\mathrm{vec}(U) + 2\mathrm{vec}(U)^\top \mathrm{vec}(H^\top Q D \Sigma F^\top)$, where $\Sigma = M - \mu\mu^\top$. Note that since $\Sigma \succeq \lambda_{\min}(\hat{\Sigma})I_{N_\xi}$, $F$ has full row-rank and $R \succ 0$, the Kronecker product in the objective is positive definite. Hence, with the injective parametrization $\mathrm{vec}(U) =S\theta$,
% , where $\theta$ is the vector of block lower triangular entries of $U$,
the unique optimizer $\theta^\star$ is the unique solution to the first-order optimality condition \eqref{eq:theta}, and $U^\star = \mathrm{vec}^{-1}(S\theta^*)$. To prove the second assertion, note that Theorem \ref{thm:equality} (i) applies by assumption. By solvability of \eqref{eq:DRCu} and uniqueness of best responses, it follows that the unique optimal solution of \eqref{eq:DRCu} is the best response to $(\mu^*, M^*)$.
\hfill \QED

\vspace{0.5em}

\textit{Proof of Proposition \ref{prop:linear_moment_objective_gelbrich_constraint}:} The form of the maximizer $(\mu^\star, \Sigma^\star)$ of \eqref{eq:linear_moment_objective_gelbrich_constraint} given in \eqref{eq:solution_dual} is proven in \cite[Lemma 3]{nguyen2021meancov}. The bound $\Sigma^\star \succeq \lambda_{\min}(\hat{\Sigma})I_n$ is proven in \cite[Proposition A.4]{nguyen2021mean}. It remains to prove the bounds on $\gamma^\star$ given in \eqref{eq:bounds_dual_linear_moment_objective_gelbrich_constraint}. Note that $(\gamma^\star I_n-\Psi)^{-1}
(\gamma^\star\hat\mu+\psi)-\hat\mu=(\gamma^\star I_n-\Psi)^{-1}
(\Psi\hat\mu+\psi)$. Moreover, since $\Psi = \sum_{i=1}^n\lambda_iv_i v_i^\top$, it follows that $(\gamma^\star I_n - \Psi)^{-1} = \sum_{i=1}^n(\gamma^\star - \lambda_i)^{-1}v_i v_i^\top$. Thus, $\|
(\gamma^\star I_n-\Psi)^{-1}
(\gamma^\star\hat\mu+\psi)-\hat\mu\|_2^2
=
\sum_{i=1}^n
a_i
(\gamma^\star-\lambda_i)^{-2}
$, with $a_i := |v_i^\top(\lambda_i\hat\mu+\psi)|^2$. Similarly, $\mathrm{Tr}[\hat\Sigma(I_n - \gamma^\star(\gamma^\star I_n - \Psi)^{-1})^2] = \mathrm{Tr}[\hat\Sigma(-\Psi(\gamma^\star I_n - \Psi)^{-1})^2] = \sum_{i=1}^n b_i(\gamma^\star - \lambda_i)^{-2}$, where $b_i:=\lambda_i^2 v_i^\top \hat{\Sigma}v_i$. Equation \eqref{eq:optimality_condition_dual_linear_moment_objective_gelbrich_constraint} now reads $\rho^2 = \sum_{i=1}^n(a_i + b_i)(\gamma^\star - \lambda_i)^{-2}$. Since $a_i, b_i\geq 0$, $\lambda_1$ is the largest eigenvalue of $\Psi$ and $\gamma^\star > \lambda_1$, we have that $\rho^2 \leq \sum_{i=1}^n(a_i + b_i)(\gamma^\star - \lambda_1)^{-2}$, yielding the upper bound $\gamma^\star \leq \lambda_1 + \rho^{-1}(\sum_{i=1}^n (a_i + b_i))^{1/2}$. On the other hand, since all terms in the sum $\sum_{i=1}^n(a_i + b_i)(\gamma^\star - \lambda_i)^{-2}$ are nonnegative, it follows that $\rho^2 \geq (a_1 + b_1)(\gamma^\star - \lambda_1)^{-2}$, yielding the lower bound $\gamma^\star \geq \lambda_1 + \rho^{-1}(a_1 + b_1)^{1/2}$. \hfill \QED

\vspace{0.5em}

\textit{Proof of Proposition \ref{prop:mean_covariance_given_optimal_controller}:}
Under Assumption \ref{assm:process_noise_cost}, the unique characterization of the best response $(\mu^\star, M^\star)$ to $(q,U)$ follows from Proposition \ref{prop:linear_moment_objective_gelbrich_constraint} with $\Psi = K^\top(U)K(U)$ and $\psi = K^\top(U)Lq$ using the same argument as in the proof of Proposition \ref{prop:ub}. The proof of the second assertion is analogous to the proof of the second assertion in Proposition \ref{prop:optimal_controller_given_mean_covariance}. \hfill \QED

\vspace{0.5em}

\textit{Proof of Proposition \ref{prop:gradient_computation}:}
% The proof uses a generalization of Danskin's theorem for coercive functions \cite[Theorem 4.13]{bonnans2013perturbation}.
It holds that $J(q, U, \cdot, \cdot)$ is differentiable on $\mathcal{M}^+$, while $J$ and $\nabla_{(\mu,M)}J$ are continuous on $\mathcal{U} \times \mathcal{M}^+$. Moreover, $J(\cdot, \cdot, \mu, M)$ is uniformly coercive on $\mathcal{M}^+$. Indeed, for all $\delta \in (0,1)$, up to a multiplicative positive constant, $J(q, U, \mu, M)$ is lower bounded by $\lambda_{\min}(\hat{\Sigma})\|UF\|_\mathrm{F}^2 + \|UF \mu + q\|_\mathrm{2}^2 \geq \lambda_{\min}(\hat{\Sigma})\|UF\|_\mathrm{F}^2 - (1/\delta - 1)\|UF\|_\mathrm{F}^2 \|\mu\|_2^2 + (1-\delta)\|q\|_2^2$, where the inequality can be shown from Young's inequality; details are omitted. Since $\mu$ is bounded and $\lambda_{\min}(\hat\Sigma) > 0$, uniform coercivity follows by selecting $\delta < 1$ large enough. Hence, $J$ satisfies the inf-compactness assumption in \cite[Theorem~4.13]{bonnans2013perturbation}, and it follows that $V$ has the directional derivative $V'_{(d,D)}(\mu,M) = \inf_{(q, U) \in \mathcal{U}^\star} \langle \nabla_{(\mu,M)}J(q, U, \mu, M), (d, D) \rangle$
% for $(d, D) \in \mathbb{R}^{N_\xi}\times\mathbb{R}^{N_\xi \times N_\xi}$,
where $\langle \cdot, \cdot\rangle$ denotes the inner product on $\mathbb{R}^{N_\xi}\times\mathbb{R}^{N_\xi \times N_\xi}$ and $\mathcal{U}^\star := \argmin_{(q,U) \in \mathcal{U}}J(q, U, \mu, M)$. By Proposition \ref{prop:optimal_controller_given_mean_covariance}, $\mathcal{U}^*$ contains the single element $(q^\star, U^\star)$. Hence, $V'_{(d,D)}(\mu,M) = \langle \nabla_{(\mu,M)}J(q^\star, U^\star, \mu, M), (d, D) \rangle$, which implies that $\nabla_{(\mu, M)} V(\mu, M) = \nabla_{(\mu, M)} J(q^\star, U^\star, \mu, M)$, and the statement follows by computation.\hfill \QED

\end{document}